\renewcommand{\subsection}[1]{\vspace{3mm}\refstepcounter{subsection}\noindent{\bf \thesubsection. #1.} }
\newcommand{\np}{\vspace{3mm}\refstepcounter{subsection}\noindent{\bf \thesubsection. }}
\renewcommand{\subsubsection}[1]{\vspace{3mm}\refstepcounter{subsubsection}\noindent{\bf \thesubsubsection. #1.} }
\numberwithin{equation}{section}
\renewcommand{\geq}{\geqslant}
\renewcommand{\leq}{\leqslant}
\newcommand{\Osh}{{\mathcal O}}                        
\renewcommand{\H}{\mathrm{H}}                          
\newcommand{\K}{\mathrm{K}}                            
\newcommand{\cchar}{\operatorname{char}}
\newcommand{\Ish}{\mathcal{I}}
\renewcommand{\div}{\operatorname{div}}
\newcommand{\kk}{\mathbf{k}}
\newcommand{\N}{\operatorname{N}}
\newcommand{\Vol}{\operatorname{Vol}}
\newcommand{\spec}{\operatorname{Spec}}
\renewcommand{\emptyset}{\varnothing}
\newcommand{\KK}{\mathbf{K}}
\newcommand{\FF}{\mathbf{F}}
\newcommand{\NN}{\mathbb{N}} 
\newcommand{\PP}{\mathbb{P}} 
\newcommand{\QQ}{\mathbb{Q}} 
\newcommand{\RR}{\mathbb{R}} 
\newtheorem{theorem}{Theorem}[section]
\newtheorem{corollary}[theorem]{Corollary}
\newtheorem{proposition}[theorem]{Proposition}
\theoremstyle{definition}
\newtheorem{remark}[theorem]{Remark}
\begin{document}
\title{On arithmetic general theorems for polarized varieties}

\author{Nathan Grieve}
\address{Department of Mathematics, Michigan State University,
East Lansing, MI, 48824, USA}
\email{grievena@msu.edu}%

\begin{abstract} 
We apply Schmidt's Subspace Theorem to establish Arithmetic General Theorems for projective varieties over number and function fields.  Our first result extends an analogous result of M.~Ru and P.~Vojta.  One aspect to its proof makes use of a filtration construction which appears in work of Autissier.  Further, we consider work of M.~Ru and J.~T.-Y.~Wang, which pertains to an extension of K.~F. Roth's theorem for projective varieties in the sense of D.~McKinnon and M.~Roth.  Motivated by these works, we establish our second Arithmetic General Theorem, namely a form of Roth's theorem for exceptional divisors.  Finally, we observe that our results give, within the context of Fano varieties, a sufficient condition for validity of the main inequalities predicted by Vojta.  
\end{abstract}
\thanks{\emph{Mathematics Subject Classification (2010):} 14G05, 14G40, 11G50, 11J97, 14C20.}

\maketitle

\section{Introduction}

\np  The classical theory of Weierstrass points of algebraic curves highlights beautiful interplay between the geometry, combinatorics, analysis and arithmetic of a given compact Riemann surface and its Jacobian.  That story also suggests that divisorial filtrations of linear series on higher dimensional projective varieties are also of independent interest.  For example, they should have applications to the arithmetic, moduli and stability of projective varieties.  

\np  In \cite{Grieve:chow:approx}, we obtained results in this direction.  Indeed, motivated by \cite{McKinnon-Roth} and \cite{Grieve:Function:Fields}, we showed how a form of Roth's theorem, for polarized projective varieties, is related to important combinatorial and moduli theoretic invariants.  One feature of our \cite{Grieve:chow:approx}, is that we expound upon nontrivial connections between: (i)  Geometric Invariant Theory, especially the theory of Chow forms of polarized varieties; (ii) the theories of Okounkov bodies and graded linear series; and (iii) Diophantine Geometry.  Among other works, some more recent starting points for our \cite{Grieve:chow:approx} include \cite{Boucksom:Chen:2011}, \cite{Boucksom:Kuronya:Maclean:Szemberg:2015} and 
\cite{Lazarsfeld:Mustata:2009}.

\np  The interplay between Diophantine Geometry, Geometric Invariant Theory and Probability Theory is well known.  In this spirit, some representative works include \cite{Evertse:Ferretti:2002}, \cite{Faltings:Wustholz}, \cite{Ferretti:2000} and \cite{Ferretti:2003}.  At the same time, we have made  progress in our understanding of concepts within Toric Geometry, for example the theories of test configurations, K-stability and the Duistermaat-Heckman measures.  As two important works in this area, we mention \cite{Donaldson:2002} and \cite{Boucksom-Hisamoto-Jonsson:2016}.

\np  Returning to questions in arithmetic, an important  insight of P.~Corvaja and U.~Zannier was the fact that Schmidt's Subspace Theorem, combined with a detailed understanding of orders of vanishing of sections at rational points, could be used to prove the celebrated Siegel's theorem on $S$-integral points of affine curves, \cite{Corvaja:Zannier:2002}.  Both Schmidt's Subspace Theorem and the filtration construction of Corvaja-Zannier have been further refined and applied on a number of different occasions (\cite{Corvaja:Zannier:2004a}, \cite{Corvaja:Zannier:2004b}, \cite{Levin:2009}, \cite{Autissier:2011} and \cite{Levin:2014}).  

\np  Our main purpose here is to study these above mentioned topics within the context of work of Ru-Vojta, \cite{Ru:Vojta:2016}, which extends earlier work of Ru, \cite{Ru:2017}.  In doing so, we obtain results which concern the  arithmetic of filtered linear series on projective varieties.  Our results complement those of \cite{Ru:Vojta:2016} and \cite{Ru:2017}.  For example, they pertain to the nature of Weil functions evaluated at rational points outside of proper Zariski closed subsets.  In proving our main result, Theorem \ref{arithmetic:general:thm:volume:constant}, we study the main Arithmetic General Theorem of \cite{Ru:Vojta:2016}.  We generalize that result, see Theorem \ref{general:Arithmetic:General:Theorem}, and give applications, Corollary  \ref{Roth:exceptional:divisor:intro} and Corollary  \ref{vojta:inequalities}.  

\np  For more precise statements, an important aspect of \cite{Ru:Vojta:2016} and \cite{Ru:2017} is the use of Schmidt's Subspace Theorem to deduce Arithmetic General Theorems.  Those results apply to polarized varieties over number fields. Here we view these theorems from a broader perspective.  Specifically, we use the techniques of \cite{Ru:Vojta:2016}, which build on those of \cite{Levin:2009}, \cite{Autissier:2011} and \cite{Ru:2017}, to establish Theorem \ref{arithmetic:general:thm:volume:constant} below.  One feature of our results here is that they apply to polarized varieties over number and characteristic zero function fields.  Again, as in \cite{Ru:Vojta:2016}, one key point is an application of Schmidt's Subspace Theorem.  Although our proof of Theorem \ref{arithmetic:general:thm:volume:constant} is based on that of \cite{Ru:Vojta:2016}, here we offer a slight conceptual improvement.  Indeed, we express some aspects of the proof using expectations of discrete measures.  

\np  The use of such measures in the study of diophantine and arithmetic aspects of linear series on projective varieties is well established in the literature, \cite{Faltings:Wustholz}, \cite{Ferretti:2000}, \cite{Ferretti:2003}, \cite{Boucksom:Chen:2011}, \cite{Grieve:chow:approx}.  Similar kinds of expectations and discrete measures also play an important role in the K-stability of projective varieties, \cite{Boucksom-Hisamoto-Jonsson:2016}.  They also have important connections to measures of asymptotic growth of linear series, \cite{Boucksom:Kuronya:Maclean:Szemberg:2015}.  As it turns out, the study of those works was one impetus to the Arithmetic General Theorems that we establish here.   

\np  In a related direction, we establish a form of Roth's Theorem for exceptional divisors, see Corollary \ref{Roth:exceptional:divisor:intro}.  That result is motivated by the main theorem obtained by Ru-Wang, \cite{Ru:Wang:2016}, the main results of McKinnon-Roth, \cite{McKinnon-Roth}, \cite{McKinnon-Roth-Louiville}, as well as our related works, \cite{Grieve:Function:Fields}, \cite{Grieve:chow:approx}.  We also deduce further consequences, Corollary  \ref{vojta:inequalities} and Theorem \ref{vojta:inequalities:thm}, that pertain to Vojta's main conjecture.    As one other complementary result, a form of Schmidt's Subspace Theorem, which involves Weil functions and Seshadri constants for subvarieties, has been obtained by Heier-Levin, \cite{Heier:Levin:2017}.  

\np  To state Theorem \ref{arithmetic:general:thm:volume:constant}, let $\KK$ be a number field or a characteristic zero function field with fixed algebraic closure $\overline{\KK}$.  Let $\FF / \KK$ be a finite extension, $\KK \subseteq \FF \subseteq \overline{\KK}$, let $S$ be a fixed finite set of places of $\KK$ and extend each $v \in S$ to a place $v$ of $\FF$.
Suppose that $X$ is a geometrically irreducible, geometrically normal projective variety over $\KK$.  Fix nonzero effective Cartier divisors $D_1,\dots, D_q$ on $X$, defined over $\FF$, and fix Weil functions $\lambda_{D_i,v}(\cdot)$ for each $D_i$ and each $v \in S$. 
Our conventions are such that each of the Weil functions $\lambda_{D_i,v}(\cdot)$ are normalized relative to the base field $\KK$.  

\np  Let $L$ be a \emph{big line bundle} on $X$, defined over $\KK$.  We consider the \emph{asymptotic volume constants} of $L$ along each of the Cartier divisors $D_i$:
\begin{equation}\label{Asym:Vol:Costants}
\beta(L,D_i) := \int_0^{\infty} \frac{\Vol(L-tD_i)}{\Vol(L)} \mathrm{d} t.
\end{equation}
Put 
$$D = D_1 + \hdots + D_q$$
 and, for each $v \in S$, choose $\lambda_{D,v}(\cdot)$, a Weil function for $D$ with respect to $v$.  The \emph{proximity function} of $D$ with respect to $S$ is then:
$$
m_S(\cdot,D) = \sum_{v \in S} \lambda_{D,v}(\cdot).
$$

\np  Our main result gives a unified picture of the related works, \cite{McKinnon-Roth}, \cite{Grieve:Function:Fields}, \cite{Ru:Vojta:2016}, \cite{Ru:Wang:2016} and \cite{Grieve:chow:approx}.  It also generalizes the Arithmetic General Theorem of \cite{Ru:Vojta:2016}.  For example, Theorem \ref{arithmetic:general:thm:volume:constant} below also applies to the case that $\KK$ is a characteristic zero function field.  It also allows for each of the Cartier divisors $D_i$ to be defined over $\FF / \KK$ a finite extension of the base field.

\begin{theorem}\label{arithmetic:general:thm:volume:constant}
Let $L$ be a big line bundle on $X$ and suppose that the Cartier divisors $D_1,\dots,D_q$ intersect properly.  Let $\epsilon > 0$.  There exists constants $a_\epsilon, b_\epsilon > 0$ with the property that either:
$$ h_L(x) \leq a_\epsilon;$$
or 
$$
m_S(x,D) \leq \left( \max_{1 \leq j \leq q} \beta(L,D_j)^{-1} + \epsilon \right) h_L(x) + b_\epsilon
$$
for all $\KK$-rational points $x \in X(\KK)$ outside of some proper Zariski closed subset $Z \subsetneq X$.
\end{theorem}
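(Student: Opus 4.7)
The plan is to combine Ru--Vojta's strategy (Schmidt's Subspace Theorem applied to sections of $NL$ filtered by vanishing along the $D_i$) with Autissier's construction of a basis of $V_N := H^0(X_\FF, NL)$ adapted simultaneously to all $D_1,\dots,D_q$. Fix $\epsilon > 0$ and $N \gg 0$. Under the proper intersection hypothesis, Autissier's inductive refinement of the natural filtrations $H^0(X_\FF, NL - tD_i)$ produces a basis $\mathcal{B}_N$ of $V_N$ together with multi-indices $\mu(s) = (\mu_1(s),\dots,\mu_q(s)) \in \ZZ_{\geq 0}^q$ satisfying $s \in H^0(X_\FF, NL - \sum_i \mu_i(s)\,D_i)$ for each $s \in \mathcal{B}_N$. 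The marginal asymptotic---phrased as the expectation of the discrete probability measure $(\dim V_N)^{-1}\sum_{s \in \mathcal{B}_N}\delta_{\mu_i(s)/N}$ on $[0,\infty)$ converging to $\beta(L,D_i)$---takes the quantitative form
\[
\frac{1}{N \dim V_N}\sum_{s \in \mathcal{B}_N}\mu_i(s) \;=\; \frac{1}{N \dim V_N}\sum_{t \geq 1} h^0(X_\FF, NL - tD_i) \;\longrightarrow\; \beta(L,D_i)
\]
as $N \to \infty$, for each $i = 1,\dots,q$; this is the role played by the definition of $\beta$.

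Next I would derive the Weil-function estimate: the lower boundedness of Weil functions of effective divisors yields
\[
\lambda_{s, v}(x) \;\geq\; \sum_{i=1}^q \mu_i(s)\, \lambda_{D_i,v}(x) - O(1)
\]
for each $v \in S$, each $s \in \mathcal{B}_N$, and each $x \in X(\KK)$ outside the base locus of $|NL|$, with implicit constant independent of $x$. Using $|NL|$ to embed $X \hookrightarrow \PP(V_N^*)$ over $\FF$ and applying Schmidt's Subspace Theorem---valid for number fields and characteristic zero function fields---to the linear forms dual to $\mathcal{B}_N$, I would obtain a proper Zariski closed $Z \subsetneq X$ such that on $X(\KK)\setminus Z$,
\[
\sum_{v \in S}\sum_{s \in \mathcal{B}_N} \lambda_{s, v}(x) \;\leq\; (\dim V_N + \epsilon')\, h_{NL}(x) + O(1) \;=\; (\dim V_N + \epsilon')\,N h_L(x) + O(1).
\]
Combining the two bounds, swapping summations, invoking the marginal asymptotic with $N$ large, and dividing by $N \dim V_N$ gives
\[
\sum_{v \in S}\sum_{i=1}^q \beta(L, D_i)\, \lambda_{D_i, v}(x) \;\leq\; (1 + \epsilon'')\, h_L(x) + O(1).
\]

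To finish, because $\beta(L, D_i) \geq (\max_j \beta(L, D_j)^{-1})^{-1}$ for every $i$, and Weil functions may be replaced by their positive parts modulo bounded error, on $X(\KK)\setminus Z$ I conclude
\[
m_S(x, D) \;\leq\; \max_{1 \leq j \leq q}\beta(L, D_j)^{-1} \cdot \sum_{v \in S}\sum_{i=1}^q \beta(L, D_i)\, \lambda_{D_i, v}(x) + O(1) \;\leq\; \bigl(\max_j \beta(L, D_j)^{-1} + \epsilon\bigr) h_L(x) + b_\epsilon,
\]
with $a_\epsilon$ chosen to absorb the constants in the complementary alternative. The hard part will be Autissier's construction of $\mathcal{B}_N$ with the simultaneous multi-vanishing property together with the quantitative control on each marginal $\sum_{s \in \mathcal{B}_N} \mu_i(s)$; both depend crucially on the proper intersection of $D_1,\dots,D_q$. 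Secondary technical points are the uniformity of the Weil-function estimate as $s$ ranges over $\mathcal{B}_N$ and $N \to \infty$, and the descent of Schmidt's inequality from $\FF$ to $\KK$ (tracking the $\KK$-normalization of the $\lambda_{D_i, v}$ and the extensions of places).
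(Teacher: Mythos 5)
There is a genuine gap in your proposal, and it sits exactly at the step you flag as ``the hard part'': no single basis $\mathcal{B}_N$ of $H^0(X_\FF, NL)$ can carry multi-indices $\mu(s) = (\mu_1(s), \dots, \mu_q(s))$ with $s \in H^0(X_\FF, NL - \sum_i \mu_i(s) D_i)$ whose $i$th marginal satisfies
\[
\sum_{s \in \mathcal{B}_N} \mu_i(s) \;=\; \sum_{t \geq 1} h^0\bigl(X_\FF, NL - t D_i\bigr)
\]
for every $i = 1, \dots, q$ simultaneously. Already for $X = \PP^1$, $L = \Osh_{\PP^1}(1)$, and $q = 3$ distinct points $D_1, D_2, D_3$ (which intersect properly, trivially), each basis element of $H^0(\Osh(N))$ is a degree-$N$ form, so $\sum_{i=1}^3 \mu_i(s) \leq N$ for every $s$; summing over a basis of size $N+1$ bounds $\sum_i \sum_s \mu_i(s)$ by $N(N+1)$. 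But the requested marginals would force this total to be $3 \binom{N+1}{2} = \tfrac{3}{2} N(N+1)$, a contradiction for all $N \geq 1$. Thus the ``quantitative control on each marginal'' you defer to is not merely hard: it is false, and your derivation of $\sum_{v \in S}\sum_{i} \beta(L, D_i)\,\lambda_{D_i, v}(x) \leq (1+\epsilon'')\, h_L(x) + \mathrm{O}(1)$ collapses.

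What Autissier's construction (as packaged in Ru--Vojta's Proposition~6.7 and Lemma~6.8, and recalled here in Proposition~\ref{ARV-filt-prop2} and \eqref{jet:prop:eqn16}) actually provides is a \emph{family} of filtrations $\mathcal{F}^\bullet(m;\sigma;\mathbf{a})$ indexed by subsets $\sigma \in \Sigma$ and weight vectors $\mathbf{a} \in \Delta_\sigma(b)$, with one adapted basis $\mathcal{B}_{\sigma;\mathbf{a};m}$ per pair. Each such basis by itself controls only the \emph{minimum} over $i$ of $\sum_{\ell \geq 1} h^0(X, mL - \ell D_i)$, and it is the supremum of the resulting divisors $\sum_{s \in \mathcal{B}_{\sigma;\mathbf{a}}} \div(s)$ over all $(\sigma, \mathbf{a})$ --- equivalently the $\max_J$ over linearly independent subsets in Proposition~\ref{Schmidt:Subspace:Thm} --- that ties the estimate together: near any given point $x$ only at most $\dim X$ of the $D_i$ can be locally present, and the appropriate $(\sigma, \mathbf{a})$ is chosen point by point. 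This is why the final coefficient is $\max_j \beta(L, D_j)^{-1}$ (coming from the min in the lower bound), and also where the $\tfrac{b}{b+d}$ degradation factor enters, to be removed only in the $b \to \infty$ limit. Your overall scaffolding --- Weil-function lower bound for sections, Schmidt's Subspace Theorem for the upper bound, matching against $\beta$ --- is the right one, but it must be run over the whole indexed collection of bases with the $\max/\min$ mechanism, not over a single magical basis.
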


\np  By analogy with \cite[Section 3]{Boucksom-Hisamoto-Jonsson:2016}, the constants $\beta(L,D_i)$ can be identified with the limit expectation of the \emph{Duistermaat-Heckman measures}.  Further, when $L$ is assumed to be very ample, $\beta(L,D_i)$ is identified with the normalized Chow weight of $L$ along $D_i$, \cite{Grieve:chow:approx}.  We discuss these matters in Section \ref{jet:amplitude}.

\np  Theorem \ref{arithmetic:general:thm:volume:constant} implies a form of Roth's theorem for exceptional divisors.  Let 
$$\pi\colon \widetilde{X} \rightarrow X_\FF$$ 
be the blowing-up of $X$ along a proper subscheme $Y \subsetneq X$, defined over $\FF$, with exceptional divisor $E$.   The  \emph{asymptotic volume constant} takes the form:
$$ 
\beta(\pi^*L, E) = \beta_E(L) = \mathbb{E}(\nu) = \lim_{m \to \infty} \mathbb{E}(\nu_m).
$$
Here $\mathbb{E}(\nu_m)$ is the expectation of the discrete measures $\nu_m$ determined by  the filtration of $\H^0(X_\FF, mL_\FF)$ which is induced by $\mathcal{F}^\bullet R(L_\FF)$, the filtration of the section ring $R(L_\FF)$ which is determined by $E$.
Again, by analogy with \cite[Section 3]{Boucksom-Hisamoto-Jonsson:2016}, the limit expectation $\mathbb{E}(\nu)$ can also be identified with the limit expectation of the \emph{Duistermaat-Heckman measures}, compare also with \cite[Section 1]{Boucksom:Chen:2011} and \cite[Section 1]{Boucksom:Kuronya:Maclean:Szemberg:2015}.

\begin{corollary}\label{Roth:exceptional:divisor:intro}
If $\epsilon > 0$, then there exists positive constants $a_\epsilon$ and $b_\epsilon$ so that either:
$$h_L(x) \leq a_\epsilon; $$
or
$$
\sum_{v \in S} 
\lambda_{E,v}(x) \leq \left( \frac{1}{\beta_E(L)} + \epsilon \right) h_L(x) + b_{\epsilon}
$$
for all $\KK$-rational points $x \in X(\KK)$ outside of some proper subvariety $Z \subsetneq X$.
\end{corollary}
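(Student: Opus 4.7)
The plan is to deduce the corollary from Theorem \ref{arithmetic:general:thm:volume:constant}, applied on the blow-up $\widetilde{X}$ with the pulled-back line bundle $\widetilde{L} := \pi^* L$ and the single exceptional Cartier divisor $E$.  Since $\pi$ is birational, $\Vol(\widetilde{L}) = \Vol(L) > 0$, so $\widetilde{L}$ is big; and since bigness is an open condition in the big cone, $\widetilde{L} - tE$ remains big for all sufficiently small $t > 0$, which makes the asymptotic volume constant $\beta(\widetilde{L}, E) = \beta_E(L)$ strictly positive, so that the reciprocal appearing in the inequality is well defined.  With $q = 1$, the proper intersection hypothesis of Theorem \ref{arithmetic:general:thm:volume:constant} is vacuous, and $E$ is an effective Cartier divisor on $\widetilde{X}$ defined over $\FF$, as required.

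Applying Theorem \ref{arithmetic:general:thm:volume:constant} on $\widetilde{X}$ would therefore yield constants $\widetilde{a}_\epsilon, \widetilde{b}_\epsilon > 0$ and a proper Zariski closed subset $Z' \subsetneq \widetilde{X}$ such that for every $\widetilde{x} \in \widetilde{X}(\KK) \setminus Z'$, either $h_{\widetilde{L}}(\widetilde{x}) \leq \widetilde{a}_\epsilon$, or
$$\sum_{v \in S} \lambda_{E,v}(\widetilde{x}) \leq \left( \frac{1}{\beta_E(L)} + \epsilon \right) h_{\widetilde{L}}(\widetilde{x}) + \widetilde{b}_\epsilon.$$
I would then descend this to $X$: every $\KK$-rational point $x \in X(\KK) \setminus Y$ lifts uniquely to $\widetilde{x} = \pi^{-1}(x) \in \widetilde{X}(\KK)$, the symbol $\lambda_{E,v}(x)$ in the statement is to be interpreted via this lift, and functoriality of heights gives $h_{\widetilde{L}}(\widetilde{x}) = h_L(x) + O(1)$.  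Setting $Z := Y \cup \pi(Z')$, a proper Zariski closed subvariety of $X$ because $\dim \pi(Z') \leq \dim Z' < \dim X$ and $Y$ is already proper, the dichotomy on $\widetilde{X}$ transfers directly to the claimed dichotomy on $X$ after absorbing the $O(1)$ into enlarged constants $a_\epsilon, b_\epsilon$.

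The only place where genuine care is needed is checking that the two possible meanings of $\beta_E(L)$ agree: the volume-integral $\beta(\pi^*L, E)$ of \eqref{Asym:Vol:Costants}, and the limit expectation $\mathbb{E}(\nu) = \lim_{m \to \infty} \mathbb{E}(\nu_m)$ attached to the discrete measures $\nu_m$ coming from the filtration $\mathcal{F}^\bullet R(L_\FF)$ of the section ring by orders of vanishing along $E$.  This matching of definitions is precisely the content of the discussion immediately preceding the corollary (and of the circle of ideas concerning Okounkov bodies and Duistermaat--Heckman measures referenced there).  Once it is in hand, the remainder of the argument is a routine pullback-and-descent from $\widetilde{X}$ to $X$.
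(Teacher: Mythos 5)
Your proposal is correct and follows the same route as the paper, which disposes of the corollary in a single sentence: apply Theorem~\ref{arithmetic:general:thm:volume:constant} to~$\widetilde{X}$. You have supplied the bookkeeping the paper leaves implicit — that~$\pi^*L$ is still big, that~$q=1$ renders the proper-intersection hypothesis automatic, that~$\beta(\pi^*L,E)=\beta_E(L)$ via Proposition~\ref{jet:prop:1}, and that functoriality of heights together with~$Z:=Y\cup\pi(Z')$ descends the dichotomy from~$\widetilde{X}$ to~$X$ — but none of this departs from the paper's argument. The one technical point you (like the paper) leave implicit is that Theorem~\ref{arithmetic:general:thm:volume:constant} and Proposition~\ref{jet:prop:1} are stated for geometrically normal varieties defined over~$\KK$, whereas~$\widetilde{X}$ is constructed over~$\FF$ and need not be normal; the standing convention in the paper is to pass to a normal proper model and to regard Weil/height functions on~$\widetilde{X}$ as normalized relative to~$\KK$, so this is an accepted abuse rather than a gap.
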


We prove Corollary  \ref{Roth:exceptional:divisor:intro} in Section \ref{arithmetic:application:proofs}.   

\np  One additional implication of Theorem \ref{arithmetic:general:thm:volume:constant} and  Theorem \ref{general:Arithmetic:General:Theorem} applies to \emph{Fano varieties}.  For example, we obtain a sufficient condition for validity of the inequalities predicted by Vojta's main conjecture, \cite{Vojta:1987}.  This observation, Corollary \ref{vojta:inequalities} below, is a natural extension to \cite[Theorem 10.1]{McKinnon-Roth}.  It has also been noted by Ru and Vojta, \cite[Corollary 1.12]{Ru:Vojta:2016}.  Here, we view it as a consequence of a more general statement; see Theorem \ref{vojta:inequalities:thm}.

\begin{corollary}
\label{vojta:inequalities}
Suppose $X$ that is nonsingular, that $-\K_X$ is ample and that  
$$D = D_1 + \hdots + D_q$$ 
is a simple normal crossings divisor on $X$.  In this context, if 
$$\beta(- \K_X, D_i) \geq 1,$$ 
for all $i$, then Vojta's inequalities hold true.  Precisely, if $M$ is a big divisor on $X$, then for all $\epsilon > 0$, there exists constants $a_\epsilon, b_\epsilon > 0$, so that either
$$ 
h_{- \K_X}(x) \leq a_\epsilon
$$
or
$$
m_{S}(x,D) + h_{\K_X}(x) \leq \epsilon h_M(x) + b_{\epsilon}
$$
for all $\KK$-rational points $x \in X(\KK)$ outside of some proper subvariety $Z \subsetneq X$.
\end{corollary}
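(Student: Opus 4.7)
The plan is to apply Theorem \ref{arithmetic:general:thm:volume:constant} directly with $L = -\K_X$ and the divisors $D_1, \dots, D_q$. Since $-\K_X$ is ample it is in particular big, and since $D = D_1 + \cdots + D_q$ is a simple normal crossings divisor on the nonsingular variety $X$, its components meet properly, so the hypotheses of the theorem are met.

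Given $\epsilon > 0$, I would introduce an auxiliary parameter $\delta > 0$ (to be calibrated later) and apply Theorem \ref{arithmetic:general:thm:volume:constant} with this $\delta$ in place of the theorem's $\epsilon$. This produces constants $a_\delta, b_\delta > 0$ and a proper Zariski closed subset $Z_1 \subsetneq X$ such that, for every $x \in X(\KK) \setminus Z_1$, either $h_{-\K_X}(x) \leq a_\delta$, or
$$m_S(x,D) \;\leq\; \left( \max_{1 \leq j \leq q} \beta(-\K_X,D_j)^{-1} + \delta \right) h_{-\K_X}(x) + b_\delta.$$
The hypothesis $\beta(-\K_X, D_j) \geq 1$ forces $\max_j \beta(-\K_X,D_j)^{-1} \leq 1$, so the leading coefficient is at most $1 + \delta$. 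Using functoriality of heights, $h_{-\K_X}(x) + h_{\K_X}(x) = O(1)$, and the second alternative rearranges to
$$m_S(x,D) + h_{\K_X}(x) \;\leq\; \delta \, h_{-\K_X}(x) + O(1).$$

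It remains to dominate $h_{-\K_X}$ by $h_M$. Because $M$ is big, I would fix a positive integer $n$ together with a decomposition $nM \sim A + E$ in which $A$ is ample and $E$ is effective. Standard height machinery then yields a constant $C > 0$ and a proper Zariski closed subset $Z_0 \subsetneq X$, containing $\supp E$, such that $h_{-\K_X}(x) \leq C\, h_M(x) + O(1)$ for all $x \in X(\KK) \setminus Z_0$. Calibrating $\delta := \epsilon / C$ and taking $Z := Z_0 \cup Z_1$, together with appropriate choices of $a_\epsilon$ and $b_\epsilon$, then delivers the dichotomy stated in Corollary \ref{vojta:inequalities} for all $x \in X(\KK) \setminus Z$.

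The essential but entirely standard step is the height comparison $h_{-\K_X}(x) \leq C\, h_M(x) + O(1)$ outside some proper Zariski closed subset; given this, the remainder is a direct application of Theorem \ref{arithmetic:general:thm:volume:constant} combined with the observation that the hypothesis $\beta(-\K_X,D_j) \geq 1$ absorbs the leading coefficient, so that what survives can be controlled by $\epsilon\, h_M(x)$. No genuinely new analytic input beyond Theorem \ref{arithmetic:general:thm:volume:constant} is required, which is why this consequence is naturally flagged as a corollary.
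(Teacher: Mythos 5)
Your proposal is correct and, at bottom, takes the same approach as the paper: apply Theorem \ref{arithmetic:general:thm:volume:constant} with $L = -\K_X$, use the hypothesis $\beta(-\K_X, D_i) \geq 1$ to bound the leading coefficient by $1+\delta$, rewrite $h_{-\K_X}+h_{\K_X}=\mathrm{O}(1)$ to leave $\delta\,h_{-\K_X}$, and then dominate $h_{-\K_X}$ by $C\,h_M$ outside a proper closed set using the bigness of $M$. The paper packages this as a special case of Theorem \ref{vojta:inequalities:thm} (which runs the bound through a blowup along a subscheme $Y$, trivial here since $D$ is Cartier), and it performs the big-$M$ reduction via the decomposition $mM \sim_{\mathrm{lin}} -\K_X + N$ with $N$ effective, getting $h_{-\K_X}\le m\,h_M + \mathrm{O}(1)$ off $\mathrm{Bs}(N)$; you instead write $nM \sim A + E$ with $A$ ample and $E$ effective and then compare $h_{-\K_X}$ with $h_A$. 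Both decompositions are standard applications of \cite[Corollary 2.2.7]{Laz} and yield the same dichotomy. The one step you leave a little compressed is the comparison $h_{-\K_X}\le C\,h_M+\mathrm{O}(1)$: with your decomposition you still need the additional (easy) comparison between the two ample heights $h_{-\K_X}$ and $h_A$, which the paper's choice of decomposition sidesteps; this is harmless but worth making explicit.
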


\np  We prove Corollary  \ref{vojta:inequalities} in Section \ref{arithmetic:application:proofs}.
When $-\K_X$ is very ample, then, as is a consequence of \cite[Theorem 5.2]{Grieve:chow:approx}, the condition that $\beta(- \K_X, D_i) \geq 1$ can be expressed in terms of a normalized Chow weight for the anti-canonical embedding.   From this point of view, Vojta's inequalities are implied by a sort of Chow stability condition. 

\np  To put matters into perspective, Corollary  \ref{vojta:inequalities} raises the question as to the extent to which its hypothesis can be satisfied in nontrivial situations.  To that end,  Corollary  \ref{vojta:inequalities} is a special case of Theorem \ref{vojta:inequalities:thm}.  That result can be applied in a variety of contexts including the examples mentioned in \cite[Section 10]{McKinnon-Roth}.  Studying the hypothesis of Corollary  \ref{vojta:inequalities} and Theorem \ref{vojta:inequalities:thm} in detail is a topic, of independent interest, which we do not pursue here.

\np  In proving Theorem \ref{arithmetic:general:thm:volume:constant}, we establish a form of Schmidt's Subspace Theorem for effective linear series on projective varieties (Proposition \ref{Schmidt:Subspace:Thm}).  In Theorem \ref{general:Arithmetic:General:Theorem},  we also extend the main Arithmetic General Theorem of Ru and Vojta, \cite{Ru:Vojta:2016}.   Finally, we use results from our \cite{Grieve:chow:approx}, which build on results and ideas from \cite{Boucksom:Chen:2011}, \cite{Boucksom:Kuronya:Maclean:Szemberg:2015} and \cite{Boucksom-Hisamoto-Jonsson:2016}, to interpret the asymptotic volume constant in terms of orders of vanishing along subvarieties.  This theme appears in various contexts within Diophantine Approximation, for example \cite{Autissier:2011}, \cite{Ru:Vojta:2016} and \cite{Ru:Wang:2016}.   We make those connections precise in Section \ref{jet:amplitude}.

\subsection{Notations and conventions}  Throughout this article, $\KK$ denotes a number field or a characteristic zero function field. Further, $\FF / \KK$ denotes a fixed finite extension of $\KK$ contained in $\overline{\KK}$ a fixed algebraic closure of $\KK$.  All absolute values, height functions and Weil functions are normalized relative to $\KK$.  

By a \emph{variety}, we mean a reduced projective scheme over a given base field. If $L$ is a line bundle on a geometrically irreducible variety $X$, defined over $\KK$, then $L_\FF$ denotes its pullback to 
$$X_\FF := X \times_{\spec(\KK)}  \spec(\FF).$$  

Using terminology that is consistent with \cite{Ru:Vojta:2016}, we say that a collection of Cartier divisors $D_1,\dots,D_q$ on $X$, defined over $\FF$, intersect properly if for all nonempty subsets 
$$\emptyset \not = I \subseteq \{1,\dots, q \}$$ 
and each $x \in \bigcap_{i \in I} D_i$, the collection of local equations for the $D_i$ near $x$ form a regular sequence in the local ring $\Osh_{X,x}$.  

We also denote the \emph{greatest lower bound} of $D_1,\dots,D_q$ by $\bigwedge_{i \in I} D_i$ and their \emph{least upper bound} by $\bigvee_{i \in I} D_i$.  In particular, in line with \cite{Ru:Vojta:2016}, we may realize these (birational) divisors as Cartier divisors on some normal proper model of $X$.

Finally, when no confusion is likely, we often times use additive (as opposed to multiplicative) notation to denote tensor products of coherent sheaves.

\subsection{Acknowledgements}  
I thank Aaron Levin and Steven Lu for helpful discussions and encouragement.  I also thank Julie Wang for conversations and comments on related topics and for sharing with me a preliminary version of her joint work \cite{Ru:Wang:2016}.  Finally, I thank anonymous referees for their remarks and am especially grateful to Min Ru for helpful comments and for sharing with me the final version of his joint work \cite{Ru:Vojta:2016}.  This work was conducted while I was a postdoctoral fellow at Michigan State University.  

\section{Preliminaries}\label{Preliminaries:Weil:height}

In this section, we make precise concepts and conventions which are required for what follows.  We also establish Proposition \ref{Schmidt:Subspace:Thm} which we use, in Section \ref{Arithmetic:General:Theorem:Proof}, to establish Theorem \ref{general:Arithmetic:General:Theorem}.  Our approach to absolute values and Weil functions is similar to \cite[Section 2]{Grieve:Function:Fields} and \cite[Section 1]{Bombieri:Gubler}.

\subsection{Absolute values and the product formula}  Let $\KK$ be a number field or a characteristic zero function field with set of places $M_\KK$.  For each $v \in M_\KK$, there exists a representative $|\cdot|_v$ so that the collection of such absolute values $|\cdot|_v$, $v \in M_\KK$, satisfy the product rule:
$$\prod_{v \in M_\KK} |\alpha|_v = 1, $$
for each $\alpha \in \KK^\times$.  Our choice of representatives $|\cdot|_v$, $v \in M_\KK$, are made precise in the following way.  When $\KK = \QQ$, if $p \in M_\QQ$ and $p = \infty$, then $|\cdot|_p$ is the ordinary absolute value on $\QQ$.  If $p \in M_\QQ$ is a prime number, then $|\cdot|_p$ is the usual $p$-adic absolute value on $\QQ$, normalized by the condition that $|p|_p = p^{-1}$.  For a general number field $\KK$, if $\mathfrak{p} \in M_\KK$, then $\mathfrak{p} | p$ for some $p \in M_\QQ$ and we put:
$$ 
|\cdot|_{\mathfrak{p}} := \left|\N_{\KK_{\mathfrak{p}} / \QQ_p}(\cdot)\right|_p^{1 / [\KK : \QQ]}.
$$
Here $\N_{\KK_{\mathfrak{p} / \QQ_p}}(\cdot) : \KK_{\mathfrak{p}} \rightarrow \QQ_p$ is the norm from $\KK_{\mathfrak{p}}$ to $\QQ_p$.  

When $\KK$ is a function field, $\KK = \kk(Y)$ for $(Y,\mathcal{L})$ a polarized variety over an algebraically closed base field $\kk$, $\cchar (\kk) = 0$, and $Y$ is irreducible and nonsingular in codimension one.  The local ring $\Osh_{Y,\mathfrak{p}}$ of a prime divisor $\mathfrak{p} \subseteq Y$ is a discrete valuation ring with discrete valuation $\operatorname{ord}_{\mathfrak{p}}(\cdot)$; put:
$$ 
|\cdot|_{\mathfrak{p}} := \mathrm{e}^{- \operatorname{ord}_{\mathfrak{p}}(\cdot) \deg_{\mathcal{L}}(\mathfrak{p})}.
$$
In this context, we also write $M_\KK = M_{(Y, \mathcal{L})}$ to indicate dependence on the polarized variety $(Y,\mathcal{L})$.  

In general, given a finite extension $\FF / \KK$, if $w \in M_\FF$ is a place of $\FF$ lying over a place $v \in M_\KK$, then we put: 
$$
|| \cdot ||_w := \left| \N_{\FF_w / \KK_v}(\cdot) \right|_v.
$$
Further, let:
$$ 
\left | \cdot \right |_{w,\KK} := \left| \N_{\FF_w / \KK_v} (\cdot) \right|^{\frac{1}{[\FF_w : \KK_v ]}}_v = || \cdot ||_w^{\frac{1}{[\FF_w : \KK_v ] }}.
$$
Then $\left | \cdot \right |_{w,\KK}$ is an absolute value on $\FF$ which represents $w$ and extends $|\cdot|_v$.  We often identify $v$ and $w$ and write:
\begin{equation}\label{absolute:value:extend}
\left | \cdot \right |_{v,\KK} := 
\left| \cdot \right |_{w, \KK} 
=
\left| \N_{\FF_w / \KK_v} (\cdot) \right|^{\frac{1}{[\FF_w : \KK_v ]}}_v 
= 
|| \cdot ||_v^{\frac{1}{[\FF_w : \KK_v ] }}.
\end{equation}

Finally, in case that $\KK = \kk(Y)$ is a function field, then $\FF = \kk(Y')$ for $Y' \rightarrow Y$ the normalization of $Y$ in $\FF$.  In this context, $M_\FF = M_{(Y',\mathcal{L}')}$, for $\mathcal{L}'$ the pullback of $\mathcal{L}$ to $Y'$.

\subsection{Weil and Proximity functions for Cartier divisors}  
Let $X$ be a geometrically irreducible projective variety over $\KK$.  We consider local Weil functions determined by meromorphic sections of $P$, a line bundle on $X$ defined over $\FF$.  Fix global sections $s_0,\dots,s_n$ and $t_0,\dots,t_m$ of globally generated line bundles $M$ and $N$ on $X$, defined over $\FF$, and so that 
$$P \simeq M \otimes N^{-1}.$$  
Fix $s$ a meromorphic section of $P$.  As in \cite{Bombieri:Gubler}, we say that
$$ 
\mathcal{D} = \mathcal{D}(s) = (s; M,\mathbf{s};N,\mathbf{t})
$$
is a \emph{presentation} of $P$ with respect to $s$ (defined over $\FF$).  

We define the \emph{local Weil function of $P$}, with respect to $\mathcal{D}$ and a fixed place $v \in M_\KK$, by:
\begin{equation}\label{local:weil:defn} 
\lambda_{\mathcal{D}}(y,v) :=  
\max_k \min_\ell \log \left | \frac{s_k}{t_\ell s} (y) \right|_{v, \KK} 
= \max_k \min_\ell \log \left| \left | \frac{s_k}{t_\ell s} (y) \right | \right|_v^{\frac{1}{[\FF_w : \KK_v] }}.
\end{equation}
This function has domain $X(\KK) \setminus \operatorname{Supp}(s)(\KK)$.  In \eqref{local:weil:defn}, the place $v$ is extended to $\FF$ and the absolute value $|\cdot|_{v,\KK}$ is defined as in \eqref{absolute:value:extend}. 
Some basic properties of local Weil functions can be deduced along the lines of \cite[Section 2.2]{Bombieri:Gubler} and \cite[Chapter 10]{Lang:Diophantine}.  

At times, we identify a meromorphic section $s$ of $P$ with the Cartier divisor 
$$D = \operatorname{div}(s)$$ 
on $X_\FF$ that it determines.   In this context, we also say that $D$ is a \emph{Cartier divisor} on $X$, \emph{defined} over $\FF$.  We also use the notations $\lambda_{D,v}(\cdot)$, $\lambda_{s,v}(\cdot)$ and $\lambda_{\mathcal{D}(s)}(\cdot,v)$ to denote the local Weil function $\lambda_{\mathcal{D}}(\cdot,v)$.

We define the \emph{global Weil function} for $\mathcal{D}$, normalized relative to $\KK$, by:
$$ 
\lambda_{\mathcal{D}}(\cdot) = \sum_{v \in M_\KK} \lambda_{\mathcal{D}}(\cdot;v).
$$
This function has domain the set of $\KK$-points $x \in X(\KK)$ outside of the support of $D = \operatorname{div}(s)$.  

The \emph{proximity function} of $D$ with respect to a finite set $S \subsetneq M_\KK$ of places of $\KK$ also has domain  $X(\KK) \setminus \operatorname{Supp}(D) (\KK)$.  It is defined by:
$$ 
m_S(\cdot,D) := \sum_{v \in S} \lambda_{D,v}(\cdot).
$$

\subsection{Weil functions for subschemes}
Similar to \cite{Silverman:1987}, via Weil functions of the exceptional divisor $E$ of 
$$\pi \colon \widetilde{X} \rightarrow X_\FF,$$ 
the blowing-up of $X$ along a proper subscheme $Y \subsetneq X$, defined over $\FF$, we have concepts of $\lambda_{Y,v}(\cdot)$, the Weil function of $Y$ with respect to places $v \in S$.  More precisely, we identify $E$ with a line bundle on $\widetilde{X}$ and defined over $\FF$.  We then define the Weil function for $Y$ with respect to places $v$ of $\KK$ (extended to $\FF$) via:
$$\lambda_{Y,v}(\cdot) := \lambda_{E,v}(\cdot);$$ 
such functions have domain the set of $\KK$-points of $X$ outside of the support of $Y$.

\subsection{Logarithmic height functions}  Let $h_L(\cdot)$ denote the logarithmic height of a line bundle $L$ on $X$, defined over $\KK$.  This function is normalized relative to the base field $\KK$.  When $L$ is very ample, $h_L(\cdot)$ is obtained by pulling back the standard logarithmic height function on projective space $\PP^n_\KK$: 
$$h_{\Osh_{\PP^n}(1)}(\mathbf{x}) = \sum_{v \in M_\KK} \max_j \log | x_j |_v,$$
for $\mathbf{x} = (x_0,\dots,x_n) \in \PP^n(\KK)$ and $n = h^0(X,L) - 1$.  In general, $h_L(\cdot)$ is obtained by first expressing $L$ as a difference of very ample line bundles $L \simeq M - N$.  Then $h_L(\cdot)$ is defined, up to equivalence, as the difference of $h_M(\cdot)$ and $h_N(\cdot)$.  

\subsection{Schmidt's Subspace Theorem}  For later use, see Theorem \ref{general:Arithmetic:General:Theorem}, we require an extension of Schmidt's Subspace Theorem.

\begin{proposition}[Compare with {\cite[Theorem 2.7]{Ru:Vojta:2016}}]
\label{Schmidt:Subspace:Thm}  Let $X$ be a geometrically irreducible projective variety over $\KK$ and $L$ an effective line bundle on $X$, defined over $\KK$.  Let 
$$0 \not = V \subseteq \H^0(X,L)$$
be an effective linear system, put $n := \dim |V|$, and for each place $v \in S$, fix  a collection of sections:
$$ \sigma_{1,v},\dots,\sigma_{q,v} \in V_\FF := \FF \otimes_\KK V \subseteq \H^0(X_\FF, L_\FF),$$ 
with Weil functions $\lambda_{\sigma_{i,v},v}(\cdot)$.
Let $\epsilon > 0$.  Then there exists positive constants $a_\epsilon$ and $b_\epsilon$ together with a proper Zariski closed subset $Z \subsetneq X$ so that for all $x \in X(\KK) \setminus Z(\KK)$, either
$$h_L(x) \leq a_\epsilon;$$ or 
$$\max_J \sum_{j \in J}  \sum_{v \in S} \lambda_{\sigma_{j,v},v}(x) \leq (\epsilon + n + 1) h_{L}(x) + b_\epsilon.
$$
Here the maximum is taken over all subsets $J \subseteq \{1,\dots,q\}$ for which the sections $\sigma_{j,v}$, with $j \in J$, are linearly independent.
\end{proposition}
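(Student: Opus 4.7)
My plan is to reduce Proposition \ref{Schmidt:Subspace:Thm} to the classical Schmidt Subspace Theorem on $\PP^n$ via the rational map defined by the linear system $V$. I fix a $\KK$-basis $s_0,\ldots,s_n$ of $V$ (with $n+1 = \dim_\KK V$) and let $\phi\colon X \dashrightarrow \PP^n_\KK$ be the associated rational map, which is defined over $\KK$. Its base locus $Z_0 \subsetneq X$ is a proper Zariski closed subset because $V \ne 0$, and since $s_0,\ldots,s_n$ are linearly independent in $\H^0(X,L)$, the closure $\overline{\phi(X \setminus Z_0)}$ is \emph{nondegenerate} in $\PP^n$, i.e., is not contained in any hyperplane. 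Passing to a resolution $\pi\colon \widetilde{X} \to X$ of the indeterminacy of $\phi$, one obtains a morphism $\widetilde{\phi}\colon \widetilde{X} \to \PP^n$ together with a decomposition $\pi^* L = \widetilde{\phi}^* \Osh_{\PP^n}(1) + F$, where $F$ is an effective divisor (the fixed part of the pullback linear system).

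After base change to $\FF$, each nonzero section $\sigma_{j,v} \in V_\FF$ admits an expansion $\sigma_{j,v} = \sum_{k=0}^n a_{j,v,k}\, s_k$ with $a_{j,v,k} \in \FF$, and therefore corresponds to the hyperplane $H_{j,v} \subseteq \PP^n_\FF$ cut out by the linear form $\sum_k a_{j,v,k} X_k$. Linear independence of $\{\sigma_{j,v}\}_{j \in J}$ inside $V_\FF$ translates directly into linear independence of the defining linear forms of $\{H_{j,v}\}_{j \in J}$. By the standard functoriality of Weil functions under pullback along $\widetilde{\phi}$, one has
\[
\lambda_{\sigma_{j,v},v}(x) = \lambda_{H_{j,v},v}(\phi(x)) + O(1)
\]
for every $x \in X(\KK)$ lying outside $Z_0 \cup \supp(\sigma_{j,v})$. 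Likewise, the effectivity of $F$ yields the one-sided height comparison
\[
h_{\Osh_{\PP^n}(1)}(\phi(x)) \leq h_L(x) + O(1),
\]
valid outside a further proper Zariski closed subset of $X$, since global Weil functions of effective divisors are bounded from below.

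I then apply the classical Schmidt Subspace Theorem in the form of \cite[Theorem 2.7]{Ru:Vojta:2016} to the $\KK$-rational points $\phi(x) \in \PP^n(\KK)$ and to the family of $\FF$-hyperplanes $\{H_{j,v}\}_{j,v}$: there exist a proper Zariski closed subset $Z_1 \subsetneq \PP^n$ and a constant $b'_\epsilon > 0$ so that
\[
\max_J \sum_{j \in J} \sum_{v \in S} \lambda_{H_{j,v},v}(\mathbf{y}) \leq (n+1+\epsilon)\, h_{\Osh_{\PP^n}(1)}(\mathbf{y}) + b'_\epsilon
\]
for all $\mathbf{y} \in \PP^n(\KK) \setminus Z_1(\KK)$, where the maximum is over subsets $J$ for which $\{H_{j,v}\}_{j \in J}$ is linearly independent. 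Setting $Z := Z_0 \cup \phi^{-1}(Z_1) \subsetneq X$, which is proper because $\phi(X \setminus Z_0)$ is nondegenerate in $\PP^n$ and hence not contained in $Z_1$, and combining the above inequality with the Weil function identification and the height comparison from the previous paragraph, I obtain the desired estimate for $x \in X(\KK) \setminus Z(\KK)$ after absorbing all additive constants into $b_\epsilon$; the alternative clause $h_L(x) \leq a_\epsilon$ is retained for notational consistency with the later Arithmetic General Theorems that invoke this proposition. The main obstacle is the one-sided height comparison $h_{\Osh_{\PP^n}(1)}(\phi(x)) \leq h_L(x) + O(1)$ in the presence of a possibly nontrivial fixed part $F$: equality need not hold, but the effectivity of $F$ furnishes the inequality in precisely the direction required to lift the classical statement on $\PP^n$ to the desired statement on $X$.
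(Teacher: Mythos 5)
Your overall strategy — resolve the indeterminacy of the rational map $\phi\colon X\dashrightarrow\PP^n$ defined by $V$, decompose $\pi^*L$ as $\widetilde\phi^*\Osh_{\PP^n}(1)$ plus an effective fixed part $F$, and then transport Schmidt's Subspace Theorem from $\PP^n$ back to $X$ — is the same as the paper's (the paper writes $\pi^*L_\FF\simeq M_\FF+B_\FF$ and invokes the Bombieri--Gubler/Grieve form of the Subspace Theorem on $\widetilde X$). However, your Weil-function comparison
\[
\lambda_{\sigma_{j,v},v}(x)=\lambda_{H_{j,v},v}(\phi(x))+\mathrm{O}(1)
\]
is not correct when $F\ne 0$. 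The divisors $\div(\sigma_{j,v})$ on $X$ and $\widetilde\phi^*H_{j,v}$ on $\widetilde X$ differ precisely by $F$ (on $\widetilde X$ one has $\pi^*\div(\sigma_{j,v})=\widetilde\phi^*H_{j,v}+F$), so functoriality of Weil functions actually gives
\[
\lambda_{\sigma_{j,v},v}(x)=\lambda_{H_{j,v},v}(\phi(x))+\lambda_{F,v}(\pi^{-1}(x))+\mathrm{O}(1),
\]
and the local term $\lambda_{F,v}$ is \emph{unbounded} as $\pi^{-1}(x)$ approaches $\supp F$; it is not $\mathrm{O}(1)$. Since this extra term is nonnegative up to $\mathrm{O}(1)$ (effectivity of $F$ gives the lower bound, not an upper bound), the ``identity'' you state is precisely the wrong direction, so you cannot feed your bound on $\sum_{j,v}\lambda_{H_{j,v},v}$ directly into a bound on $\sum_{j,v}\lambda_{\sigma_{j,v},v}$.

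The argument can be repaired, and the repair is the content of the paper's displays \eqref{P:eqn6} and \eqref{P:eqn8}: after summing over $j\in J$ and $v\in S$ the $F$-contribution is at most $|J|\sum_{v\in S}\lambda_{F,v}(\pi^{-1}(x))\leq(n+1)\,h_F(\pi^{-1}(x))+\mathrm{O}(1)$, while the height relation $h_{\Osh_{\PP^n}(1)}(\phi(x))=h_L(x)-h_F(\pi^{-1}(x))+\mathrm{O}(1)$ subtracts off $(n+1+\epsilon)h_F$; the two cancel up to $-\epsilon\,h_F\leq\mathrm{O}(1)$ (again by effectivity of $F$). You already correctly flag the effectivity of $F$ as the crucial input for the one-sided height comparison, but you need to carry the $\lambda_{F,v}$ term through the Weil-function comparison as well and observe this cancellation; as written, the proof does not close. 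Two smaller points: when arguing that $Z=Z_0\cup\phi^{-1}(Z_1)$ is proper in $X$, you should use that $Z_1$ is a \emph{finite union of proper linear subspaces} (nondegeneracy of $\overline{\phi(X\setminus Z_0)}$ only rules out containment in a linear subspace, not in an arbitrary proper closed set); and the reference you invoke for the Subspace Theorem on $\PP^n$ should be the classical form (e.g.\ \cite[Theorem 7.2.2]{Bombieri:Gubler} or \cite[Theorem 5.2]{Grieve:Function:Fields}) rather than \cite[Theorem 2.7]{Ru:Vojta:2016}, since the latter is the variety-level statement that this proposition is intended to parallel.
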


\begin{remark}\label{Schmidt:Subspace:Thm:Rmk}
In Proposition \ref{Schmidt:Subspace:Thm}, the subvariety $Z$ is essentially determined by the $\KK$-span of a finite collection of hyperplane sections.
\end{remark}

\begin{proof}[Proof of Proposition \ref{Schmidt:Subspace:Thm}]
Fix $v \in S$.  Each basis 
$$
\sigma_{0,v},\dots,\sigma_{n,v} \in V_\FF \subseteq \H^0(X_\FF, L_\FF)
$$ 
determines a rational map
$
\phi_{V_\FF,v} \colon X_\FF \dashrightarrow \PP^n_\FF.
$
By elimination of indeterminacy of rational maps, we obtain a commutative diagram of $\FF$-schemes:
$$
\begin{tikzcd}
\pi^{-1}(U_\FF) \arrow[hook]{r} \arrow[swap]{d}{\wr} &\widetilde{X}_\FF \arrow[swap]{d}{\pi} \arrow{dr}{\widetilde{\phi}_{\widetilde{V}_\FF}} \\
U_\FF \arrow[hook]{r} \arrow[swap, bend right]{rr}{\phi \mid_{U_\FF}} & X_\FF \arrow[dashed]{r}  &\PP^n_\FF. 
\end{tikzcd}
$$
In particular, 
$$\widetilde{X}_{\FF} = \widetilde{X} \times_{\spec(\KK)} \spec(\FF),$$ 
for some $\KK$-variety $\widetilde{X}$, independent of $v \in S$.   The morphism $\pi$ is projective and there exists effective line bundles $M$ and $B$ on $\widetilde{X}$, defined over $\KK$, with the properties that:
$$
M_\FF \simeq \widetilde{\phi}^*_{\widetilde{V}_\FF} \Osh_{\PP^n_\FF}(1) = \pi^*L_\FF - B_\FF
$$
and
$$
\pi^*\sigma_{iv} \in \H^0(\widetilde{X},M_\FF)\text{,}
$$
for each $i = 0,\dots, n$ and each $v \in S$.  Consider the Weil functions $\lambda_{\pi^*\sigma_{i,v},v}(\cdot)$ for $M$ with respect to the global sections $\pi^* \sigma_{i,v}$, the pullback of the Weil functions $\lambda_{\sigma_{i,v},v}(\cdot)$ for $L$ and the relation:
\begin{equation}\label{P:eqn5}
\pi^* L_\FF \simeq M_\FF + B_\FF.
\end{equation}
It follows from \eqref{P:eqn5} that the logarithmic heights of the line bundles $\pi^* L$, $M$ and $B$ are related by:
\begin{equation}\label{P:eqn6}
h_{\pi^*L}(\cdot) = h_M(\cdot) + h_B(\cdot) + \mathrm{O}(1).
\end{equation}
Furthermore, the local Weil functions are related via: \begin{equation}\label{P:eqn8}
\pi^* \lambda_{\sigma_i,v}(\cdot) = \lambda_{\pi^*\sigma_{i,v},v}(\cdot) + \lambda_{B,v}(\cdot) + \mathrm{O}(1).
\end{equation}
Here, 
$$\lambda_{B,v}(\cdot) = \lambda_B(\cdot,v)$$ 
is a fixed Weil function for $B$, depending on the choice of a meromorphic section.  The conclusion desired by Proposition \ref{Schmidt:Subspace:Thm} can now be deduced by combining the equations \eqref{P:eqn6} and \eqref{P:eqn8} together with the version of Schmidt's Subspace Theorem as described for instance in \cite[Theorem 7.2.2]{Bombieri:Gubler} or \cite[Theorem 5.2]{Grieve:Function:Fields}.
\end{proof}

\section{Amplitude of Jets and the Filtration construction of Autissier}\label{jet:amplitude}

Here we discuss complexity of filtered linear series.  The main point is to give a unified interpretation of concepts of growth of filtered series which appear in \cite{Autissier:2009}, \cite{Autissier:2011}, \cite{McKinnon-Roth} and \cite{Ru:Vojta:2016}.

\np  Let $L$ be a big line bundle on $X$, an irreducible projective variety over $\kk$, an algebraically closed field of characteristic zero, with section ring
$$ R(L) = R(X,L) = \bigoplus_{m \geq 0} \H^0(X, mL).$$
The \emph{vanishing numbers} of a decreasing, multiplicative $\RR$-filtration 
$$\mathcal{F}^\bullet = \mathcal{F}^\bullet R(L)$$ 
of $R(L)$ are defined, for each $m \geq 0$, by the condition that:
$$ 
a_j(mL) = \inf \left\{ t \in \RR : \operatorname{codim} \mathcal{F}^t \H^0(X, mL) \geq j + 1 \right\}.
$$
Such filtrations determine discrete measures on the real line:
$$ \nu_{m,\mathcal{F}^\bullet} = \nu_m := \frac{1}{h^0(X,mL)} \sum_j \delta_{m^{-1} a_j(mL)},$$
for each $m \geq 0$.

\np  Let $E$ be an effective line bundle on $X$.  Put:
$$
\alpha(L,E) := \frac{\sum_{\ell \geq 1} h^0(X, L - \ell E)}{h^0(X,L)}
$$
and:
\begin{equation}\label{jet:eqn3}
\beta(L,E) = \liminf_{m \to \infty} m^{-1} \alpha(mL,E) = \liminf_{m \to \infty} \frac{\sum_{\ell \geq 1} h^0(X, mL - \ell E)}{m h^0(X, mL)}.
\end{equation}
For later use, set:
$$
\gamma(L,E) = \beta(L,E)^{-1},
$$
 \cite{Ru:Vojta:2016} or \cite{Autissier:2011},
 and
 $$
 \beta_E(L) := \int_0^\infty \frac{\Vol(L - t E)}{\Vol(L)} \mathrm{d}t,
 $$
 \cite{McKinnon-Roth}.

\np  Suppose that $X$ is normal and that $L$ is a big line bundle on $X$.  We establish equivalence of the quantities $\beta(L,E)$ and $\beta_E(L)$.  

\begin{proposition}\label{jet:prop:1}
If $X$ is normal and if $L$ is a big line bundle on $X$, then:
\begin{equation}\label{jet:eqn4}
\beta(L,E) = \beta_E(L).
\end{equation}
\end{proposition}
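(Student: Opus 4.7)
The plan is to recognize both sides of \eqref{jet:eqn4} as limits of a common Riemann-type integral. Let $n = \dim X$. I would first rewrite the sum defining $\beta(L, E)$ in \eqref{jet:eqn3} as an integral, applying the elementary identity $\sum_{\ell \geq 1} f(\ell) = \int_0^\infty f(\lceil t \rceil) \, dt$ to $f(\ell) = h^0(X, mL - \ell E)$ and then rescaling via $t = ms$. This yields
\begin{equation}\label{plan:eq1}
\frac{\sum_{\ell \geq 1} h^0(X, mL - \ell E)}{m \, h^0(X, mL)} = \int_0^\infty \frac{h^0(X, mL - \lceil ms \rceil E)}{h^0(X, mL)} \, ds,
\end{equation}
whose right-hand side is already structurally parallel to the integral defining $\beta_E(L)$.

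For each fixed $s > 0$, the next step is to establish the pointwise convergence of the integrand of \eqref{plan:eq1} to $\Vol(L - sE)/\Vol(L)$. The denominator satisfies $h^0(X, mL) = \tfrac{m^n}{n!} \Vol(L) + \mathrm{o}(m^n)$, since $L$ is big and $X$ is normal. For the numerator, the class $L - (\lceil ms \rceil/m) E$ converges to $L - sE$ as $m \to \infty$, and a uniform version of the volume asymptotic, available from the Okounkov body machinery of \cite{Lazarsfeld:Mustata:2009} and \cite{Boucksom:Chen:2011} together with continuity of $\Vol(\cdot)$, delivers $h^0(X, mL - \lceil ms \rceil E) = \tfrac{m^n}{n!}\Vol(L - sE) + \mathrm{o}(m^n)$.

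I would then conclude by the Lebesgue dominated convergence theorem applied to the right-hand side of \eqref{plan:eq1}. Effectivity of $E$ bounds each integrand pointwise by $1$, and bigness of $L$ produces a finite pseudoeffectivity threshold $s_0 > 0$ beyond which $h^0(X, mL - \ell E) = 0$ whenever $\ell/m > s_0$ and $\Vol(L - sE) = 0$; hence both the integrands and their pointwise limit are supported in the fixed interval $[0, s_0]$, and $\mathbf{1}_{[0, s_0]}$ is a dominating function. Since the limit then exists as an honest limit, it coincides with the $\liminf$ defining $\beta(L, E)$, giving $\beta(L,E) = \int_0^{s_0} \Vol(L - sE)/\Vol(L) \, ds = \beta_E(L)$.

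The principal obstacle is the uniformity of the volume asymptotic used in the second paragraph: the error term in $h^0(X, kM) = \tfrac{k^n}{n!} \Vol(M) + \mathrm{o}(k^n)$ depends on $M$, while we must apply it to the $m$-dependent family $M_m = L - (\lceil ms \rceil/m) E$. Controlling this uniformly along the family, particularly when $L - sE$ lies on or near the boundary of the big cone, is precisely what is developed by the Okounkov body and graded linear series framework of \cite{Lazarsfeld:Mustata:2009}, \cite{Boucksom:Chen:2011} and \cite{Grieve:chow:approx}. An alternative route is to identify both quantities as the expectation of the limit Duistermaat--Heckman measure associated to the filtration of $R(L)$ induced by $E$, in the spirit of \cite[Section 3]{Boucksom-Hisamoto-Jonsson:2016}.
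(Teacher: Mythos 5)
Your proof follows essentially the same route as the paper's: both rewrite the normalized partial sums defining $\beta(L,E)$ as the integral $\int_0^\infty h^0(X,mL-\lceil ms\rceil E)/h^0(X,mL)\,\mathrm{d}s$, pass to the limit under the integral sign with the dominating function $\mathbf{1}_{[0,\tau]}$ (for $\tau$ the pseudoeffectivity threshold of $E$ relative to $L$), and identify the limit integrand with $\Vol(L-sE)/\Vol(L)$. The uniformity issue you flag as the principal obstacle is exactly what \cite[Theorem 1.11]{Boucksom:Chen:2011} settles; the paper invokes that result in the form $\beta_E(L)=\lim_m\mathbb{E}(\nu_m)$ together with the integration-by-parts identity $\mathbb{E}(\nu_m)=\int_0^\infty h_m(t)\,\mathrm{d}t$ for the filtration of $R(L)$ by order of vanishing along $E$, which is your pointwise convergence statement repackaged as convergence of expectations of the discrete measures $\nu_m$.
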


\begin{proof}
Let 
$$\mathcal{F}^\bullet = \mathcal{F}^\bullet R(L)$$ 
be the filtration of $R(L)$ induced by orders of vanishing along $E$, with vanishing numbers $a_j(mL)$ and discrete measures $\nu_m$.  Put:
$$
h_m(t) := \frac{h^0(X, m L - m t E ) }{h^0(X, mL)}
$$
and
$$
\mathbb{E}(\nu_m) := \int_0^\infty t \cdot \mathrm{d}(\nu_m).
 $$ 
As in \cite[Theorem 1.11]{Boucksom:Chen:2011}, we can write:
$$
\mathbb{E}(\nu_m) = \int_0^{\infty} t \cdot \mathrm{d}(\nu_m) = - \int_0^{\infty} t \cdot h_m'(t) \mathrm{d} t = \int_0^{\infty} h_m(t) \mathrm{d}t.
$$
Further, we know from \cite[Theorem 1.11]{Boucksom:Chen:2011} that:
\begin{equation}\label{jet:eqn8}
\beta_E(L) = \lim_{m \to \infty} \mathbb{E}(\nu_m).
\end{equation}
The functions $h_m(t)$ are also constant on intervals of width $m$.  Thus:
\begin{equation}\label{jet:eqn9}
\int_0^{\infty} h_m(t) \mathrm{d}t = \frac{1}{m h^0(X, mL)} \sum_{\ell = 0}^{ \infty } h^0(X, mL - \ell E).
\end{equation}
Rewriting \eqref{jet:eqn9}, we get:
\begin{equation}\label{jet:eqn10}
\int_0^{\infty} h_m(t) \mathrm{d}t = m^{-1} + \frac{m^{-1}}{h^0(X,mL)} \sum_{ \ell = 1}^{ \infty } h^0(X, mL - \ell E).
\end{equation}
Because of \eqref{jet:eqn8} and dominated convergence,  \eqref{jet:eqn10} implies, as $m \to \infty$:
\begin{equation}\label{jet:eqn11}
\beta_E(L) = \mathbb{E}(\nu) = \lim_{m \to \infty } \frac{\sum_{ \ell = 1}^{ \infty  } h^0(X, mL - \ell E) }{m h^0(X, mL)}.
\end{equation}
\end{proof}

\np  When $L$ is very ample, there is also an interpretation as a \emph{normalized Chow weight}.

\begin{corollary}\label{jet:cor:2}
If $X$ is normal and $L$ is very ample, then:
\begin{equation}\label{jet:eqn5}
\beta(L,E) = \beta_E(L) = \frac{e_X(\mathbf{c})}{(\dim X + 1)(\deg_L X)} = \lim_{m\to \infty} \frac{\sum_{\ell \geq 1} h^0(X, mL - \ell E)}{m h^0(X, mL)}.
\end{equation}
Here $e_X(\mathbf{c})$ is the Chow weight of $X$ in $\PP^n_\kk$ with respect to 
$$\mathbf{c} = (a_0(L),\dots,a_n(L)),$$ 
the weight vector of $L$ along $E$, and the embedding of $X$ into $\PP^n_\kk$ is induced by a basis of $\H^0(X,L)$ which is compatible with the filtration given by orders of vanishing along $E$.
\end{corollary}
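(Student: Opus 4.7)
The plan is to chain together three identifications: the equality $\beta(L,E) = \beta_E(L)$ via Proposition \ref{jet:prop:1}; a strengthening of that proposition, visible in its own proof, showing that the $\liminf$ in \eqref{jet:eqn3} is a genuine limit equal to $\beta_E(L)$; and the identification of this common value with the normalized Chow weight by \cite[Theorem 5.2]{Grieve:chow:approx}.

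For the first and last equalities, since a very ample line bundle on the normal projective variety $X$ is in particular big, Proposition \ref{jet:prop:1} applies directly and gives $\beta(L,E) = \beta_E(L)$. Moreover, equations \eqref{jet:eqn8}, \eqref{jet:eqn10} and \eqref{jet:eqn11} in the proof of that proposition exhibit $\frac{1}{m h^0(X,mL)} \sum_{\ell \geq 1} h^0(X, mL - \ell E)$ as $\mathbb{E}(\nu_m) - m^{-1}$, which converges by dominated convergence to $\mathbb{E}(\nu) = \beta_E(L)$. Thus the $\liminf$ appearing in \eqref{jet:eqn3} is actually a genuine limit and equals $\beta_E(L)$, yielding the first and the last equalities of the corollary simultaneously.

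For the middle equality, I would invoke \cite[Theorem 5.2]{Grieve:chow:approx}. Choose a basis of $\H^0(X,L)$ adapted to the filtration $\mathcal{F}^\bullet R(L)$ given by orders of vanishing along $E$, with weights $\mathbf{c} = (a_0(L),\dots,a_n(L))$; since $L$ is very ample, this basis determines a closed immersion $X \hookrightarrow \PP^n_\kk$, and $\mathbf{c}$ acts via a one-parameter subgroup of $\GL_{n+1}$, producing the Chow weight $e_X(\mathbf{c})$. The content of \cite[Theorem 5.2]{Grieve:chow:approx} is exactly the asymptotic formula
\[
\frac{e_X(\mathbf{c})}{(\dim X + 1)(\deg_L X)} = \lim_{m\to\infty} \frac{1}{m h^0(X, mL)} \sum_{\ell \geq 1} h^0(X, mL - \ell E),
\]
which, combined with the previous step, supplies the middle equality.

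The main technical point is this last identification, whose justification rests on comparing the leading term of the Hilbert function of the filtered homogeneous coordinate ring of $X \subseteq \PP^n_\kk$ with the asymptotic expansion of $h^0(X, mL - \ell E)$ coming from the filtered section ring. Both expansions flow from standard Hilbert--Samuel and asymptotic Riemann--Roch arguments, carried out in detail in \cite{Grieve:chow:approx}, so the only new task at this stage is to match normalizations, which is routine.
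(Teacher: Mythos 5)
Your argument is correct and takes essentially the same route as the paper: it chains Proposition \ref{jet:prop:1} (noting, as the paper's proof of that proposition makes explicit, that the $\liminf$ in \eqref{jet:eqn3} is a genuine limit) with a Chow-weight identification imported from \cite{Grieve:chow:approx}. The only cosmetic difference is that you cite \cite[Theorem~5.2]{Grieve:chow:approx} directly, whereas the paper cites the identity $\mathbb{E}(\nu_m) = s(m,\mathbf{c})/(m\,h^0(X,mL))$ from the proof of Proposition~4.1 there and then appeals to the standard asymptotics of the Hilbert weight; these are equivalent ways of packaging the same content.
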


\begin{proof}
We combine \eqref{jet:eqn3}, \eqref{jet:eqn4} and results from \cite{Grieve:chow:approx}.  For instance, a key point, \cite[Proof of Proposition 4.1]{Grieve:chow:approx}, is:
\begin{equation}\label{jet:eqn12}
\mathbb{E}(\nu_m) = \frac{s(m,\mathbf{c})}{m h^0(X, mL)}.
\end{equation}
In \eqref{jet:eqn12}, $s(m,\mathbf{c})$ denotes the \emph{$m$th Hilbert weight} of $X$ in $\PP^n_\kk$ with respect to the weight vector 
$$
\mathbf{c} =(a_0(L),\dots,a_n(L)).
$$
\end{proof}

\np  We now indicate how the filtration construction of Corvaja-Zannier, \cite{Corvaja:Zannier:2002}, Levin, \cite{Levin:2009}, Autissier, \cite{Autissier:2011}, and Ru-Vojta, \cite{Ru:Vojta:2016}, relates to Proposition \ref{jet:prop:1} and Corollary  \ref{jet:cor:2}.  Fix effective Cartier divisors $D_1,\dots, D_q$ on $X$.  We assume that these divisors intersect properly and put:
$$
\Sigma := \left\{\sigma \subseteq \{1,\dots,q \} : \bigcap_{j \in \sigma} \operatorname{Supp} D_j \not = \emptyset \right \}. 
$$  
Fix $\sigma \in \Sigma$ and, for each integer $b \geq 0$, let: 
$$ 
\Delta_\sigma = \Delta_{\sigma}(b) := \left \{\mathbf{a} = (a_i) \in \NN^{\# \sigma} : \sum_{i \in \sigma} a_i = b \right \}.
$$
Note that the set $\Delta_\sigma$ is finite.

\np  For each $t \in \RR_{\geq 0}$ and each $\mathbf{a} \in \Delta_{\sigma}$, define the ideal sheaf $\Ish(t;\mathbf{a};\sigma)$ by:
$$ 
\Ish(t;\mathbf{a};\sigma) = \sum_{
\substack{
\mathbf{b} \in \NN^{\# \sigma} \\
\sum_{i \in \sigma} a_i b_i \geq b t}
}
\Osh_X \left(-\sum_{i \in \sigma} b_i D_i \right).
$$
Note also that the ideal sheaf $\Ish(t;\mathbf{a};\sigma)$ is generated by finitely many such $\mathbf{b}$.

\np  In this way, we obtain a 
decreasing, multiplicative $\RR$-filtration of $R(L)$.  
Such filtrations have the form:
$$
\mathcal{F}^t(m;\sigma;\mathbf{a}) = \H^0\left(X,mL \otimes \Ish(t;\mathbf{a};\sigma) \right) \subseteq \H^0(X,mL),
$$
for $m \geq 0$.
If 
$$0 \not = s \in \H^0(X,mL),$$ 
then we let 
$$
\mu_{\mathbf{a}}(s) = \mu(s) = \sup \{ t \in \RR_{\geq 0} : s \in \mathcal{F}^t(m;\sigma;\mathbf{a}) \} = \max \{ t \in \RR_{\geq 0} : s \in \mathcal{F}^t(m;\sigma;\mathbf{a}) \}.
$$
In what follows, we denote by 
$$\mathcal{B}_{\sigma;\mathbf{a},m} = \mathcal{B}_{\sigma;\mathbf{a}}$$ 
a basis for $\H^0(X,mL)$ which is adapted to the filtration $\mathcal{F}^\bullet(m;\sigma;\mathbf{a})$.

\np  Fix an integer $m \geq 0$ and let 
$$a_{\min}(m;\sigma;\mathbf{a}) = a_0(m;\sigma,\mathbf{a}) \leq \dots \leq a_{n_m}(m;\sigma;\mathbf{a}) = a_{\max}(m;\sigma;\mathbf{a})
$$ 
denote the vanishing numbers of the filtrations 
$$\mathcal{F}^\bullet(m;\sigma;\mathbf{a}) = (\mathcal{F}^t(m;\sigma;\mathbf{a}))_{t \in \RR_{\geq 0}}.$$  
The discrete measures on $\RR$ that they determine are then:
$$ 
\nu(m;\sigma;\mathbf{a}) = \frac{1}{h^0(X,mL)} \sum_j \delta_{m^{-1} a_j(m;\sigma;\mathbf{a})};
$$
let $\mathbb{E}(m;\sigma;\mathbf{a})$ denote  the expectations of such measures:
$$
\mathbb{E}(m;\sigma;\mathbf{a}) := \int_0^\infty t \cdot \nu(m;\sigma;\mathbf{a}) \mathrm{d}t. 
$$

\begin{proposition}\label{ARV-filt-prop2}  Let $L$ be a big line bundle on $X$.  The expectations $\mathbb{E}(m;\sigma;\mathbf{a})$ have the properties that:
$$
\mathbb{E}(m;\sigma;\mathbf{a}) 
\geq \frac{1}{m h^0(X,mL)} \min_{1 \leq i \leq q} \left( \sum_{\ell \geq 1} h^0(X,mL - \ell D_i) \right) \text{.}
$$
\end{proposition}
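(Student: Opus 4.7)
The plan is to translate $\mathbb{E}(m; \sigma; \mathbf{a})$ into a sum of the vanishing numbers $a_j(m; \sigma; \mathbf{a})$, and then exploit the proper intersection of the $D_i$ to bound this sum from below by comparing against simultaneous orders of vanishing along the $D_i$. Since
$$
\mathbb{E}(m; \sigma; \mathbf{a}) = \frac{1}{m \, h^0(X, mL)} \sum_j a_j(m; \sigma; \mathbf{a}),
$$
the claim reduces to the inequality
$$
\sum_j a_j(m; \sigma; \mathbf{a}) \geq \min_{1 \leq i \leq q} \sum_{\ell \geq 1} h^0(X, mL - \ell D_i).
$$

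First I would verify a pointwise lower bound: for every nonzero section $s \in \H^0(X, mL)$, writing $c_i(s) := \operatorname{ord}_{D_i}(s)$ for $i \in \sigma$, one has
$$
\mu_{\mathbf{a}}(s) \geq \frac{1}{b} \sum_{i \in \sigma} a_i \, c_i(s).
$$
This holds because the proper intersection hypothesis ensures, locally at each point of $X$ and hence globally, that $s$ lies in the image of $\H^0(X, mL - \sum_{i \in \sigma} c_i(s) D_i) \to \H^0(X, mL)$; the summand $\Osh_X(-\sum_{i \in \sigma} c_i(s) D_i)$ then appears in the defining expression for $\Ish(t; \mathbf{a}; \sigma)$ whenever $t \leq \frac{1}{b} \sum_i a_i c_i(s)$. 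The principal obstacle is to produce a basis $\{s_j\}$ of $\H^0(X, mL)$ which is \emph{simultaneously} adapted to all of the order-of-vanishing filtrations $\{\H^0(X, mL - \ell D_i)\}_{\ell}$, $i \in \sigma$. Here the proper intersection yields the distributivity identity
$$
\bigcap_{i \in \sigma} \H^0(X, mL - \ell_i D_i) = \H^0\!\left(X, mL - \sum_{i \in \sigma} \ell_i D_i \right)
$$
for any $(\ell_i) \in \NN^\sigma$: local equations of the $D_i$ passing through a common point form a regular sequence, so simultaneous divisibility by each $s_{D_i}^{\ell_i}$ forces divisibility by their product. From this distributive property, a simultaneously adapted basis may be constructed by induction on $\#\sigma$, and for the resulting basis $\sum_j c_i(s_j) = \sum_{\ell \geq 1} h^0(X, mL - \ell D_i)$ for each $i \in \sigma$.

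To finish, one uses the general fact that for any basis $\{s_j\}$ of $\H^0(X, mL)$,
$$
\sum_j \mu_{\mathbf{a}}(s_j) \leq \sum_j a_j(m; \sigma; \mathbf{a}),
$$
obtained by integrating the linear-independence bound $\#\{j : s_j \in \mathcal{F}^t(m;\sigma;\mathbf{a})\} \leq \dim \mathcal{F}^t(m;\sigma;\mathbf{a})$ against $t$ and comparing with $\sum_j a_j(m;\sigma;\mathbf{a}) = \int_0^\infty \dim \mathcal{F}^t(m;\sigma;\mathbf{a}) \, \mathrm{d} t$. Applied to the simultaneously adapted basis, combined with the pointwise estimate for $\mu_{\mathbf{a}}$, this yields
$$
\sum_j a_j(m; \sigma; \mathbf{a}) \geq \frac{1}{b} \sum_{i \in \sigma} a_i \sum_{\ell \geq 1} h^0(X, mL - \ell D_i).
$$
Since $\mathbf{a} \in \Delta_\sigma(b)$ satisfies $\sum_{i \in \sigma} a_i = b$, the right-hand side is a convex combination of the sums $\sum_{\ell \geq 1} h^0(X, mL - \ell D_i)$ for $i \in \sigma$, and is therefore bounded below by $\min_{i \in \sigma} \sum_{\ell \geq 1} h^0(X, mL - \ell D_i) \geq \min_{1 \leq i \leq q} \sum_{\ell \geq 1} h^0(X, mL - \ell D_i)$, as required.
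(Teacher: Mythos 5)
Your argument departs from the paper's: the paper does not prove the inequality from first principles but obtains it by combining the tautological identity
$$\mathbb{E}(m;\sigma;\mathbf{a}) = \frac{1}{m\,h^0(X,mL)}\sum_{s\in\mathcal{B}_{\sigma;\mathbf{a};m}}\mu_{\mathbf{a}}(s),$$
valid because $\mathcal{B}_{\sigma;\mathbf{a};m}$ is adapted to the single filtration $\mathcal{F}^\bullet(m;\sigma;\mathbf{a})$, with the cited lower bound of \cite[Proposition 6.7]{Ru:Vojta:2016}. You instead try to give a self-contained proof. Several of your steps are sound: the identity $\mathbb{E}(m;\sigma;\mathbf{a})=\tfrac{1}{m h^0}\sum_j a_j$; the pointwise bound $\mu_{\mathbf{a}}(s)\geq \tfrac{1}{b}\sum_{i\in\sigma}a_i\,c_i(s)$ (which does use the regular-sequence hypothesis correctly); the comparison $\sum_j\mu_{\mathbf{a}}(s_j)\leq\sum_j a_j$ for an arbitrary basis via the layer-cake formula; and the final convexity estimate using $\sum_{i\in\sigma}a_i=b$.

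The gap is in the assertion that a basis of $\H^0(X,mL)$ \emph{simultaneously} adapted to all the filtrations $\{\H^0(X,mL-\ell D_i)\}_\ell$, $i\in\sigma$, exists, and that it ``may be constructed by induction on $\#\sigma$'' from the intersection identity
$$\bigcap_{i\in\sigma}\H^0(X,mL-\ell_i D_i) = \H^0\Bigl(X,mL-\sum_{i\in\sigma}\ell_i D_i\Bigr).$$
That intersection formula does hold under proper intersection, but by itself it does not guarantee a common adapted basis. The existence of a basis adapted to $r\geq 3$ filtrations is equivalent to the \emph{full} sublattice they generate (under both $\cap$ and $+$) being distributive, and the intersection formula controls only the meets; it says nothing directly about identities like
$$\mathcal{F}_i^{\ell_i}\cap\bigl(\mathcal{F}_j^{\ell_j}+\mathcal{F}_k^{\ell_k}\bigr) = \bigl(\mathcal{F}_i^{\ell_i}\cap\mathcal{F}_j^{\ell_j}\bigr)+\bigl(\mathcal{F}_i^{\ell_i}\cap\mathcal{F}_k^{\ell_k}\bigr).$$
Three lines in general position in a $2$-dimensional space already give three filtrations whose pairwise meets vanish (so the ``intersection distributivity'' is trivially satisfied) yet which admit no common adapted basis; one sees exactly this configuration arising from three disjoint fibers on $\PP^1\times\PP^1$, which \emph{do} intersect properly. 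What rescues that example is that disjointness forces $\sigma$ to be a singleton, so the simultaneous-basis problem never arises there; but your argument as written invokes the simultaneous basis for general $\sigma$ with $\#\sigma\geq 3$, where the needed distributivity of sums is genuinely nontrivial. Establishing it would require an additional cohomological or local-algebraic input (exactness of the relevant Koszul-type sequences at the level of $\H^0$), which is neither stated nor proved. Also note that a basis merely adapted to $\mathcal{F}^\bullet(m;\sigma;\mathbf{a})$ will not do here: for such a basis one only has $\sum_j c_i(s_j)\leq\sum_{\ell\geq 1}h^0(X,mL-\ell D_i)$, which points the wrong way. So the simultaneous adaptation is load-bearing and cannot be finessed, and the justification you give for it does not suffice.
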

\begin{proof}
This is a reformulation of \cite[Proposition 6.7]{Ru:Vojta:2016}.  In more detail, since the basis $\mathcal{B}_{\sigma;\mathbf{a};m}$ is adapted to the filtration, it follows that:
\begin{equation}\label{jet:prop:eqn:2}
\mathbb{E}(m;\sigma;\mathbf{a}) = \frac{1}{m h^0(X,mL)} \sum_{s \in \mathcal{B}_{\sigma;\mathbf{a};m}} \mu_{\mathbf{a}}(s).
\end{equation}
On the other hand, by \cite[Proposition 6.7]{Ru:Vojta:2016}, we have:
\begin{equation}\label{jet:prop:eqn3}
\frac{1}{h^0(X,mL)} \sum_{s \in \mathcal{B}_{\sigma;\mathbf{a};m}} \mu_{\mathbf{a}}(s) \geq \min_i \left( \frac{1}{h^0(X,mL)} \sum_{\ell \geq 1} h^0(X,mL - \ell D_i) \right).
\end{equation}
Thus, combining, \eqref{jet:prop:eqn:2} and \eqref{jet:prop:eqn3}, we obtain:
$$
\mathbb{E}(m;\sigma;\mathbf{a}) \geq \min_i \left( \frac{1}{m h^0(X,mL)} \sum_{\ell \geq 1} h^0(X, mL - \ell D_i) \right).
$$
\end{proof}

\np  We mention that the righthand side of Proposition \ref{ARV-filt-prop2} can be interpreted in terms of the \emph{Hilbert weights} $s(m,\mathbf{c}_i)$ of $X$ in $\PP^{n}_\kk$, $n = h^0(X,L) - 1$, with respect to the weight vectors $\mathbf{c}_i$ determined by the filtrations of $\H^0(X,mL)$ induced by the divisors $D_i$.  One description of the Hilbert weights, is given in \cite[Section 4]{Grieve:chow:approx}.

\begin{corollary}\label{ARV-filt-prop2-cor}
Let $L$ be a very ample line bundle on $X$.  Then:
$$
\mathbb{E}(m;\sigma;\mathbf{a}) \geq \min_i \left( \frac{s(m,\mathbf{c}_i)}{m h^0(X, mL) } \right) \text{.}
$$
\end{corollary}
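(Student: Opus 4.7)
The plan is to deduce the corollary by combining Proposition \ref{ARV-filt-prop2} with the Chow/Hilbert weight interpretation already recorded in the proof of Corollary \ref{jet:cor:2}. After applying Proposition \ref{ARV-filt-prop2}, it suffices to identify, for each fixed $i$ and each $m$, the quantity
$$
\sum_{\ell \geq 1} h^0(X, mL - \ell D_i)
$$
with the $m$-th Hilbert weight $s(m, \mathbf{c}_i)$ of $X$ in $\PP^n_\kk$ relative to the weight vector $\mathbf{c}_i$ determined by orders of vanishing along $D_i$.

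First I would isolate the case of a single effective Cartier divisor $E = D_i$ and consider the filtration of $\H^0(X,mL)$ by orders of vanishing along $E$. Its vanishing numbers $a_0(mL) \leq \cdots \leq a_{n_m}(mL)$ are non-negative integers, and for a basis adapted to this filtration the sum of weights is, by the definition of the Hilbert weight recalled in \eqref{jet:eqn12} and \cite[Proof of Proposition 4.1]{Grieve:chow:approx},
$$
s(m,\mathbf{c}_i) \;=\; \sum_j a_j(mL).
$$
Next, since $D_i$ is effective and the filtration by order of vanishing satisfies $\dim \mathcal{F}^\ell \H^0(X,mL) = h^0(X,mL - \ell D_i)$, summation by parts yields
$$
\sum_j a_j(mL) \;=\; \sum_{\ell \geq 1} \#\{\, j : a_j(mL) \geq \ell \,\} \;=\; \sum_{\ell \geq 1} h^0(X, mL - \ell D_i).
$$
Combining these two identities gives
$$
\sum_{\ell \geq 1} h^0(X, mL - \ell D_i) \;=\; s(m,\mathbf{c}_i)
$$
for every $m$ and every $i$, exactly as in the finite-level statement already implicit in Corollary \ref{jet:cor:2}.

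Plugging this identity into the lower bound of Proposition \ref{ARV-filt-prop2} immediately yields
$$
\mathbb{E}(m;\sigma;\mathbf{a}) \;\geq\; \min_{1 \leq i \leq q} \frac{\sum_{\ell \geq 1} h^0(X, mL - \ell D_i)}{m\, h^0(X,mL)} \;=\; \min_{1 \leq i \leq q} \frac{s(m, \mathbf{c}_i)}{m\, h^0(X,mL)},
$$
as desired. The only real point to be careful about is keeping the bookkeeping of vanishing numbers, weight vectors $\mathbf{c}_i$, and the Hilbert weight $s(m,\mathbf{c}_i)$ consistent with the conventions used in \cite{Grieve:chow:approx}; once that alignment is made (namely that $\mathbf{c}_i$ records the vanishing orders along $D_i$ of a basis of $\H^0(X,L)$, and $s(m,\mathbf{c}_i)$ is the sum of weights on an adapted basis of $\H^0(X,mL)$), the corollary follows with no further work. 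I do not anticipate a substantive obstacle beyond this bookkeeping, since the very ampleness of $L$ enters only to guarantee the projective embedding needed to define $s(m,\mathbf{c}_i)$.
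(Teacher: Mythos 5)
Your proof is correct and follows essentially the same route as the paper: the paper's one-line proof asserts the identity $\alpha(mL,D_i) \cdot h^0(X,mL) = s(m,\mathbf{c}_i)$ (equivalently $\sum_{\ell \geq 1} h^0(X, mL - \ell D_i) = s(m,\mathbf{c}_i)$) and feeds it into Proposition~\ref{ARV-filt-prop2}, which is exactly what you do. Your version is more careful in two respects: you actually supply the Abel-summation computation that justifies the identity, and you correct a typographical slip in the paper's proof (the displayed equation there reads $m\,\alpha(mL,D_i) = s(m,\mathbf{c}_i)/h^0(X,mL)$, where the factor of $m$ should not be present, as one checks by comparing with \eqref{jet:eqn12} and the definition of $\alpha$).
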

\begin{proof}
We simply observe:
$$ m \alpha(mL,D_i) = \frac{s(m,\mathbf{c}_i)}{h^0(X,mL)}. $$
\end{proof}

\np  Fix a nonzero section 
$$0 \not = s \in \H^0(X,mL)$$ and consider the finite set 
$$\K = \K_{\sigma,\mathbf{a},s}$$ 
that consists of minimal elements of the set:
$$
\left\{
\mathbf{b} \in \NN^{\# \sigma} : \sum_{i \in \sigma} a_i b_i \geq b \mu_{\mathbf{a}}(s) 
\right\}.
$$

\np  In this notation, we then have that:
$$
L^{\otimes m}  \otimes \Ish(\mu_{\mathbf{a}}(s)) = \sum_{\mathbf{b} \in \K} \left( m L - \sum_{i \in \sigma} b_i D_i \right).
$$

\np  One other important point to observe is that, as is a consequence of \cite[Proposition 4.18]{Ru:Vojta:2016}, 
\begin{equation}\label{jet:prop:eqn15}
\div(s) \geq \bigwedge_{\mathbf{b} \in \K} \left( \sum_{i \in \sigma} b_i D_i \right).
\end{equation}
In \eqref{jet:prop:eqn15}, it is meant that we identify the Cartier divisor $\operatorname{div}(s)$ with its pullback to a normal proper model of $X$ that realizes the right hand side as a Cartier divisor.

\np  Furthermore, \cite[Lemma 6.8]{Ru:Vojta:2016}, which uses \eqref{jet:prop:eqn15}, establishes that:
\begin{equation}\label{jet:prop:eqn16}
\bigvee_{\substack{\sigma \in \Sigma \\ \mathbf{a} \in \Delta_\sigma}}
\left(
\sum_{s \in \mathbf{B}_{\sigma;\mathbf{a}}} \div(s) 
\right) 
\\
\geq \frac{b}{b+d} 
\left(
\min_{1 \leq i \leq q} \sum_{\ell \geq 1}^\infty h^0(X, m L - \ell D_i) 
\right) 
D \text{,}
\end{equation}
for $d = \dim X$, the dimension of $X$.
Similarly, in \eqref{jet:prop:eqn16}, we also identify the Cartier divisor $D$ with its pullback to a normal proper model of $X$ that realizes the lefthand side as a Cartier divisor.

\section{The Arithmetic General Theorem}\label{Arithmetic:General:Theorem:Proof}

In this section, we study and extend the Arithmetic General Theorem of Ru and Vojta, \cite{Ru:Vojta:2016}.  In doing so, we prove Theorem \ref{general:Arithmetic:General:Theorem}.  In Section \ref{arithmetic:application:proofs}, we note that it implies Theorem \ref{arithmetic:general:thm:volume:constant}.

\np  In proving Theorem \ref{general:Arithmetic:General:Theorem} below, our argument is based on \cite[proof of Arithmetic General Theorem]{Ru:Vojta:2016}. 

\begin{theorem}[Compare with {\cite[Arithmetic General Theorem]{Ru:Vojta:2016}}]\label{general:Arithmetic:General:Theorem}
Let $X$ be a geometrically irreducible projective variety over $\KK$, a number field or characteristic zero function field, and let $D_1,\dots,D_q$ be nonzero effective Cartier divisors on $X$, defined over $\FF$, which intersect properly.  Let $L$ be a big line bundle on $X$, defined over $\KK$.  Then, for each $\epsilon > 0$, there exists constants $a_\epsilon$, $b_\epsilon > 0$ with the property that either:
$$ h_L(x) \leq a_\epsilon;$$
or
$$ 
m_S(x,D) \leq \left( \max_{1 \leq j \leq q} \gamma(L,D_j) + \epsilon \right) h_L(x) + b_\epsilon
$$
for all $\KK$-rational points $x \in X(\KK)$ outside of some proper Zariski closed subset $Z \subsetneq X$.
\end{theorem}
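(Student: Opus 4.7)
The plan is to follow the overall strategy of \cite[proof of Arithmetic General Theorem]{Ru:Vojta:2016}, repackaged using the measure-theoretic language of Section \ref{jet:amplitude}. Concretely, I will apply the generalized Schmidt Subspace Theorem (Proposition \ref{Schmidt:Subspace:Thm}) to a finite family of bases of $\H^0(X_\FF, mL_\FF)$ adapted to the filtrations $\mathcal{F}^\bullet(m;\sigma;\mathbf{a})$, and convert the outcome into the required arithmetic bound via the divisor-theoretic inequalities \eqref{jet:prop:eqn15}, \eqref{jet:prop:eqn16} together with the lower bound on expectations in Proposition \ref{ARV-filt-prop2}.

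First, fix integers $m,b \gg 0$ and form the finite union $\mathcal{B}(m,b) := \bigcup_{\sigma \in \Sigma,\ \mathbf{a} \in \Delta_\sigma(b)} \mathcal{B}_{\sigma;\mathbf{a};m} \subseteq \H^0(X_\FF, mL_\FF)$. For each $x \in X(\KK) \setminus \bigcup_i \supp(D_i)$ and each $v \in S$, select $\sigma_v(x) \in \Sigma$ consisting of exactly those indices $i$ for which $\lambda_{D_i,v}(x)$ exceeds a fixed threshold (so $x$ is $v$-adically close to $\bigcap_{i \in \sigma_v(x)} \supp(D_i)$), and pick $\mathbf{a}_v(x) \in \Delta_{\sigma_v(x)}(b)$ whose components are proportional to $(\lambda_{D_i,v}(x))_{i \in \sigma_v(x)}$ up to error $O(1/b)$. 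Since $\Sigma$ and each $\Delta_\sigma(b)$ are finite, only finitely many pairs $(\sigma_v(x),\mathbf{a}_v(x))$ arise.

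For the lower bound, the adaptation of $\mathcal{B}_{\sigma;\mathbf{a};m}$ to the filtration together with \eqref{jet:prop:eqn15} gives a pointwise estimate $\lambda_{s,v}(x) \geq b^{-1} \mu_{\mathbf{a}}(s) \sum_{i \in \sigma} a_i \lambda_{D_i,v}(x) - O(1)$ for each $s \in \mathcal{B}_{\sigma_v(x);\mathbf{a}_v(x);m}$. Summing over $s$, invoking \eqref{jet:prop:eqn16} with the calibrated $\mathbf{a}_v(x)$ and then summing over $v \in S$ yields
$$
\sum_{v \in S} \sum_{s \in \mathcal{B}_{\sigma_v(x);\mathbf{a}_v(x);m}} \lambda_{s,v}(x) \geq \frac{b}{b+d}\left( \min_{1 \leq i \leq q} \sum_{\ell \geq 1} h^0(X, mL - \ell D_i) \right) m_S(x,D) - O(1).
$$
For the upper bound, Proposition \ref{Schmidt:Subspace:Thm} applied to each linearly independent subset drawn from $\mathcal{B}(m,b)$ produces a proper Zariski closed $Z \subsetneq X$ and a constant $a_\epsilon$ so that, unless $h_L(x) \leq a_\epsilon$, one has
$$
\sum_{v \in S} \sum_{s \in \mathcal{B}_{\sigma_v(x);\mathbf{a}_v(x);m}} \lambda_{s,v}(x) \leq \left( h^0(X, mL) + \epsilon' \right) h_{mL}(x) + O(1)
$$
for $x \in X(\KK) \setminus Z(\KK)$. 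Dividing through by $m h^0(X, mL)$ and comparing the two inequalities, then letting first $m \to \infty$ (to invoke Proposition \ref{ARV-filt-prop2}) and then $b \to \infty$ converts the resulting coefficient into $\max_j \gamma(L,D_j) + \epsilon$, as desired.

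The main obstacle is the combinatorial bookkeeping behind the selection of $(\sigma_v(x),\mathbf{a}_v(x))$: the choices must be drawn from a single finite list so that Schmidt's Subspace Theorem yields one proper exceptional $Z$, while the calibration must be sharp enough that the lower bound actually reconstructs the full proximity $m_S(x,D)$ and the final constant reduces to $\max_j \gamma(L,D_j) + \epsilon$ rather than a strictly weaker quantity. The inequalities \eqref{jet:prop:eqn15} and \eqref{jet:prop:eqn16} are designed for exactly this purpose, but care is needed in coordinating $b$ with the approximation error in $\mathbf{a}_v(x)$ and in handling the normalizations of Weil functions and absolute values across $\FF/\KK$ given by \eqref{absolute:value:extend}.
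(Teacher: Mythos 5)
Your proposal correctly identifies the three essential ingredients that the paper also uses---the generalized Subspace Theorem (Proposition~\ref{Schmidt:Subspace:Thm}), the expectation lower bound (Proposition~\ref{ARV-filt-prop2}), and the birational divisorial inequality~\eqref{jet:prop:eqn16}---but the way you assemble them differs from the paper in a way that introduces both redundancy and a real gap.

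The redundancy: you re-introduce a pointwise calibration, choosing $(\sigma_v(x),\mathbf a_v(x))$ for each point $x$ and place $v$ so that $\mathbf a_v(x)$ is approximately proportional to $(\lambda_{D_i,v}(x))_i$. This is the older Corvaja--Zannier/Levin/Autissier device. The whole purpose of~\eqref{jet:prop:eqn16} (and of \cite[Lemma~6.8]{Ru:Vojta:2016} behind it) is to replace that per-point choice by a single birational inequality between divisors: passing to a normal proper model where the $\bigvee$ is Cartier and applying functoriality of Weil functions (\cite[Proposition~4.10]{Ru:Vojta:2016}) immediately gives
$\max_{i}\lambda_{B_i,v}(\cdot)\geq \tfrac{b}{b+d}\bigl(\min_i\sum_{\ell\geq 1}h^0(X,mL-\ell D_i)\bigr)\lambda_{D,v}(\cdot)-\mathrm O_v(1)$,
with no selection of $(\sigma,\mathbf a)$ depending on $x$. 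The paper's proof simply enumerates the finitely many bases $\mathcal B_1,\dots,\mathcal B_{T_1}$, forms the associated divisors $B_i$, and feeds the sections into Proposition~\ref{Schmidt:Subspace:Thm}. Once you cite~\eqref{jet:prop:eqn16} you should drop the calibration entirely; invoking both makes the argument muddled, and the pointwise estimate you write, $\lambda_{s,v}(x)\geq b^{-1}\mu_{\mathbf a}(s)\sum_{i\in\sigma}a_i\lambda_{D_i,v}(x)-\mathrm O(1)$, is not what falls out of~\eqref{jet:prop:eqn15} even after calibration: with $\mathbf a$ proportional to $(\lambda_{D_i,v}(x))_i$ and $\sum a_i=b$ the minimum over $\mathbf b\in\K$ of $\sum_i b_i\lambda_{D_i,v}(x)$ is bounded below by $\mu_{\mathbf a}(s)\sum_{i\in\sigma}\lambda_{D_i,v}(x)$, which differs from your expression by the factor $\|\mathbf a\|^2/b^2\leq 1$, so your stated bound is strictly weaker and would not reconstruct the desired coefficient.

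The gap: your last step ``letting first $m\to\infty$ \dots then $b\to\infty$'' cannot be taken \emph{after} applying Schmidt. The exceptional set $Z$ and the constants produced by Proposition~\ref{Schmidt:Subspace:Thm} depend on $m$ and $b$, and there is no mechanism for passing those to a limit. The correct order of quantifiers is the one in the paper: given $\epsilon>0$, first choose $\epsilon_2>0$ and integers $m,b$ so that
$\bigl(1+\tfrac{d}{b}\bigr)\max_{i}\frac{m\,h^0(X,mL)+m\epsilon_2}{\sum_{\ell\geq1}h^0(X,mL-\ell D_i)}<\max_{i}\gamma(L,D_i)+\epsilon$,
and only then run the argument once with those fixed $m,b$. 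You implicitly appeal to such a choice (``$m,b\gg 0$''), but the explicit limit at the end is not a valid move and should be replaced by this upfront choice.
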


\np  In our proof of Theorem \ref{general:Arithmetic:General:Theorem}, to reduce notation, we denote, with extensive abuse of notation, by $X$ the base change of $X$ with respect to the field extension $\FF / \KK$.  We also denote by $L$ the pullback of $L$ via this base change.  When no confusion is likely, we make no explicit mention about allowing coefficients in $\FF$, as opposed to only allowing coefficients in $\KK$.  We also emphasize that our proof relies on Proposition \ref{ARV-filt-prop2}.  For example, it requires that the divisors $D_1,\dots,D_q$, which are defined over $\FF$, intersect properly (over $\FF$).

\np  We now proceed to prove our Main Arithmetic General Theorem.

\begin{proof}[Proof of Theorem \ref{general:Arithmetic:General:Theorem}]
Let $d$ denote the dimension of $X$.  Let $\epsilon > 0$ and choose a real number $\epsilon_2 > 0$ together with positive integers $m$ and $b$ so that:
\begin{equation}
\left( 1 + \frac{d}{b} \right) \max_{1 \leq i \leq q} \left( \frac{m h^0(X,mL) + m \epsilon_2 }{ \sum_{\ell \geq 1} h^0(X,mL - \ell D_i) }\right) < 
\max_{1 \leq i \leq q} \gamma(L,D_i) + \epsilon.
\end{equation}

Write:
$$
\bigcup_{\sigma; \mathbf{a}} \mathcal{B}_{\sigma;\mathbf{a}} = \mathcal{B}_1 \bigcup \dots \bigcup \mathcal{B}_{T_2} = \{s_1,\dots,s_{T_2} \}.
$$
For each $i = 1,\dots, T_1$, let 
$$J_i \subseteq \{1,\dots,T_2\}$$ be the subset such that 
$$\mathcal{B}_i = \{s_j : j \in J_i \}.$$  
Let $B_i$ denote the divisor 
$$B_i := \div\left( \sum_{j \in B_i} \div(s_j) \right);$$ 
choose Weil functions $\lambda_{D,v}(\cdot)$, $\lambda_{B_i,v}(\cdot)$ and $\lambda_{s_j,v}(\cdot)$, for each $v \in S$, and each $j = 1,\dots, T_2$.

The key point then is that, because of \eqref{jet:prop:eqn16} and \cite[Proposition 4.10]{Ru:Vojta:2016} (parts (b) and (c) can be adapted using the general theory of \cite[Chapter 10]{Lang:Diophantine} so as to apply to our current setting), for each $v \in S$, it also holds true that:
$$
\max_{1 \leq i \leq T_1} \lambda_{B_i,v}(\cdot) + \mathrm{O}_v(1) \geq
\frac{b}{b+d}\left(
\min_{1 \leq i \leq q} \sum_{\ell \geq 1} h^0(X, mL - \ell D_i) \right) \lambda_{D,v}(\cdot).
$$
But:
$$
\max_{1 \leq i \leq T_1} \lambda_{B_i,v}(\cdot) + \mathrm{O}_v(1) = \max_{1 \leq i \leq T_1} \sum_{j \in J_i} \lambda_{s_j,v}(\cdot) + \mathrm{O}_v(1)
$$
and so:
$$
\frac{b}{d+b} \left(  \min_{1 \leq i \leq q} \sum_{\ell \geq 1} h^0(X, mL - \ell D_i ) \right) \lambda_{D,v}(\cdot) \leq \max_{1 \leq i \leq T_1} \sum_{j \in J_i} \lambda_{s_j,v}(\cdot) + \mathrm{O}_v(1).
$$
We now apply Schmidt's Subspace Theorem (in the form of Proposition \ref{Schmidt:Subspace:Thm}).  In particular, we may apply that remark to the sections $s_j$ since they are sections of $L$ (a priori with coefficients in $\FF$) and since the big line bundle $L$ is defined over the base field $\KK$.  

Our conclusion then is that there exists a proper Zariski closed subset $Z \subsetneq X$ and constants $a_{\epsilon_2}$, $b_{\epsilon_2}$ so that for all $\KK$-points $x \in X \setminus Z$ either:
$$
h_{mL}(x) \leq a_{\epsilon_2}
$$
or 
$$
\sum_{v \in S} \max_J \sum_{j \in J} \lambda_{s_j,v}(x) \leq \left(h^0(X,mL) + \epsilon_2 \right) h_{mL}(x) + b_{\epsilon_2}.
$$
Here, the maximum is taken over all subsets 
$$J \subseteq \{1,\dots, T_2 \}$$
 for which the sections $s_j$, for $j \in J$, are linearly independent.

In particular, either:
$$
h_{mL}(x) \leq a_{\epsilon_2}
$$
or
$$
\sum_{v \in S} \lambda_{D,v}(x) \leq \left(1 + \frac{d}{b} \right) \max_{1 \leq i \leq q} \left( \frac{h^0(X,mL) + \epsilon_2} {\sum_{\ell \geq 1} h^0(X,mL- \ell D_i) 
}
\right) h_{mL}(x) + b_{\epsilon_2}'
$$
for all $\KK$-points $x \in X \setminus Z$.  Finally, since 
$$h_{mL} (x) = m h_L(x),$$ we obtain that either:
$$
h_L(x) \leq A_\epsilon
$$
or
$$
\sum_{v \in S} \lambda_{D,v}(x) \leq \left( \max_{1 \leq i \leq q} \gamma(L,D_i) + \epsilon \right) h_L(x) + B_\epsilon
$$
for positive constants $A_\epsilon$, $B_\epsilon$ (depending on $\epsilon$) and all $\KK$-points $x \in X \setminus Z$.
\end{proof}

\section{Proof of Theorem \ref{arithmetic:general:thm:volume:constant}, Corollary \ref{Roth:exceptional:divisor:intro} and Corollary \ref{vojta:inequalities}
}\label{arithmetic:application:proofs}

In this section, we prove Theorem \ref{arithmetic:general:thm:volume:constant}, Corollary  \ref{Roth:exceptional:divisor:intro} and Corollary   \ref{vojta:inequalities}.

\np  First we prove Theorem \ref{arithmetic:general:thm:volume:constant}. 

\begin{proof}[Proof of Theorem \ref{arithmetic:general:thm:volume:constant}]
Combine Theorem \ref{general:Arithmetic:General:Theorem} and Proposition \ref{jet:prop:1}.
\end{proof}

\np  Next, we prove Corollary  \ref{Roth:exceptional:divisor:intro}.    
\begin{proof}[
Proof of Corollary  \ref{Roth:exceptional:divisor:intro}]   Corollary  \ref{Roth:exceptional:divisor:intro} is a consequence of Theorem \ref{arithmetic:general:thm:volume:constant} applied to $\widetilde{X}$.  
\end{proof}

\np  Finally, we note that 
Corollary  \ref{vojta:inequalities} is a special case of Theorem \ref{vojta:inequalities:thm} below.

\begin{theorem}\label{vojta:inequalities:thm}
Suppose that $X$ is a geometrically irreducible $\QQ$-Gorenstein geometrically normal variety, defined over $\KK$, and assume that the $\QQ$-Cartier divisor $-\K_X$ is $\QQ$-ample.  Let $Y \subsetneq X$ be a proper subscheme, defined over $\FF$, and write the exceptional divisor $E$ of 
$$\pi \colon \widetilde{X} \rightarrow X_\FF,$$ 
the blowing-up of $X$ along $Y$, in the form 
$$E = E_1 + \hdots + E_q,$$ 
for $E_i$ effective Cartier divisors on $\widetilde{X}$.  In this context, assume that the divisors $E_1,\dots,E_q$ intersect properly and
$$
\beta(- \pi^*\K_X, E_i) \geq 1
$$
for all $i$.  Then Vojta's inequalities hold true.  Precisely, if $M$ is a big line bundle on $X$, defined over $\KK$, then for all $\epsilon > 0$, there exists constants $a_\epsilon, b_\epsilon > 0$ so that either:
$$
h_{-\K_X}(x) \leq a_\epsilon
$$
or
$$
\sum_{v \in S} \lambda_{Y,v}(x) + h_{\K_X}(x) \leq \epsilon h_M(x) + b_\epsilon
$$
for all $\KK$-rational points $x \in X(\KK)$ outside of some proper subvariety of $X$.
\end{theorem}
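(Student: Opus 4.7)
The plan is to reduce Theorem \ref{vojta:inequalities:thm} to Theorem \ref{arithmetic:general:thm:volume:constant} applied on the blow-up $\widetilde{X}$. Specifically, I would take the big line bundle in that theorem to be $L := -\pi^*\K_X$ (replacing $\K_X$ by a sufficiently divisible multiple so that it is Cartier, and normalizing the final conclusion by the corresponding factor); this is big because $-\K_X$ is $\QQ$-ample and $\pi$ is birational. The effective Cartier divisors $E_1,\dots,E_q$ intersect properly by assumption, so the hypotheses of Theorem \ref{arithmetic:general:thm:volume:constant} are met.

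Fix an auxiliary $\epsilon_1 > 0$ to be chosen later. The hypothesis $\beta(-\pi^*\K_X, E_i) \geq 1$ for all $i$ gives $\max_j \beta(L,E_j)^{-1} \leq 1$, so Theorem \ref{arithmetic:general:thm:volume:constant} produces constants $a_{\epsilon_1}, b_{\epsilon_1} > 0$ and a proper Zariski closed subset $Z_1 \subsetneq \widetilde{X}$ such that for every $\widetilde{x} \in \widetilde{X}(\KK) \setminus Z_1(\KK)$, either $h_L(\widetilde{x}) \leq a_{\epsilon_1}$ or
\[
m_S(\widetilde{x}, E) \;\leq\; (1 + \epsilon_1)\, h_L(\widetilde{x}) + b_{\epsilon_1}.
\]

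Next, I would transfer this bound back to $X$. Points of $X(\KK) \setminus Y(\KK)$ lift uniquely through $\pi$; by the definition recalled in Section \ref{Preliminaries:Weil:height}, $\lambda_{Y,v}(x) = \lambda_{E,v}(\pi^{-1}(x))$, and by functoriality of heights $h_L(\pi^{-1}(x)) = h_{-\K_X}(x) + \mathrm{O}(1)$. Folding $\pi(Z_1) \cup Y$ into the exceptional subvariety, the alternative above becomes: either $h_{-\K_X}(x) \leq a_{\epsilon_1}'$ or, after subtracting $h_{-\K_X}(x)$ from both sides,
\[
\sum_{v \in S} \lambda_{Y,v}(x) + h_{\K_X}(x) \;\leq\; \epsilon_1\, h_{-\K_X}(x) + b_{\epsilon_1}'.
\]

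The remaining task is to replace the right-hand side by $\epsilon\, h_M(x) + b_\epsilon$. Since $M$ is big on $X$, a sufficiently divisible multiple $nM$ admits a decomposition $nM \sim A + N$ with $A$ ample and $N$ effective; enlarging $n$ further we may assume $A - (-\K_X)$ is ample as well, so $nM - (-\K_X)$ is linearly equivalent to an effective divisor. Standard height machinery then yields $h_{-\K_X}(x) \leq n\, h_M(x) + \mathrm{O}(1)$ for $x$ outside a proper Zariski closed subset $W \subsetneq X$. Choosing $\epsilon_1 := \epsilon/n$ and enlarging the exceptional subvariety $Z$ to contain $W$ completes the argument. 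The main obstacle is essentially bookkeeping: descending the conclusion on $\widetilde{X}$ to one on $X$ and comparing the two big line bundles $-\K_X$ and $M$; the diophantine core is carried entirely by the input Theorem \ref{arithmetic:general:thm:volume:constant} together with the inequality $\beta(-\pi^*\K_X, E_i) \geq 1$.
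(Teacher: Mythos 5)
Your proposal is correct and mirrors the paper's argument: both apply Theorem \ref{arithmetic:general:thm:volume:constant} (equivalently, Theorem \ref{general:Arithmetic:General:Theorem} together with Proposition \ref{jet:prop:1}) on $\widetilde{X}$ with $L = -\pi^*\K_X$, transfer back to $X$ using the identification $\lambda_{Y,v}(\cdot) = \lambda_{E,v}(\cdot)$ and functoriality of heights, and then invoke a Kodaira-type decomposition to trade $h_{-\K_X}$ for $\epsilon\,h_M$ outside a base locus. The only differences are cosmetic: the paper carries out the reduction from general big $M$ to $M = -\K_X$ at the start, citing \cite[Corollary~2.2.7]{Laz} directly, whereas you perform that reduction at the end and phrase the Kodaira step slightly more indirectly.
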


\begin{proof}[Proof of Theorem \ref{vojta:inequalities:thm} and Corollary  \ref{vojta:inequalities}]  
Without loss of generality, we may assume that the ample Cartier divisor $-\K_X$ is integral.  We first establish the well-known reduction step.  If Vojta's inequalities hold true for $M = - \K_X$ and all $\epsilon > 0$, then the same is true for all big line bundles $M$ (perhaps after adjusting $\epsilon$ and the proper subvariety).  

To this end, as in \cite[Corollary  2.2.7]{Laz}, since $- \K_X$ is ample and $M$ is big, there exists $m > 0$ and an effective line bundle $N$ on $X$ so that:
$$
m M \sim_{\mathrm{lin}} - \K_X + N.
$$
Thus:
$$
m h_M(\cdot) = h_{-\K_X}(\cdot) + h_N(\cdot) + \mathrm{O}(1);
$$
and:
\begin{equation}\label{eqn7}
h_{-\K_X}(\cdot) \leq m h_M(\cdot) + \mathrm{O}(1)
\end{equation}
outside of $\mathrm{Bs}(N)$, the base locus of $N$.  

Let $\epsilon > 0$.  
We show that:
\begin{equation}\label{eqn8}
\sum_{v \in S} \lambda_{Y,v} (x) + h_{\K_X}(x) \leq \epsilon h_M(x) + \mathrm{O}(1)
\end{equation}
for all $x \in X(\KK)$ outside of some proper Zariski closed subset $Z \subsetneq X$.  
Note:
\begin{equation}\label{eqn9}
\epsilon h_{-\K_X}(\cdot) \leq \epsilon m h_M(\cdot) + \mathrm{O}(1)
\end{equation}
outside of $\mathrm{Bs}(N)$;
put:
$$
\epsilon' = \frac{\epsilon}{m}.
$$
We can rewrite \eqref{eqn9} as:
\begin{equation}\label{eqn11}
\epsilon' h_{-\K_X}(\cdot) \leq \epsilon h_M(\cdot) + \mathrm{O}(1)
\end{equation}
outside of $\mathrm{Bs}(N)$.  On the other hand, by assumption:
\begin{equation}\label{eqn12}
\sum_{v \in S} \lambda_{Y,v} (x) + h_{\K_X}(x) \leq \epsilon' h_{-\K_X}(x) + \mathrm{O}(1)
\end{equation}
for all $x \in X(\KK)$ outside of some Zariski closed subset $W \subsetneq X$ depending on $\epsilon'$.  Combining \eqref{eqn11} and \eqref{eqn12}, we then have:
\begin{equation}\label{eqn12'}
\sum_{v \in S} \lambda_{Y,v}(\cdot) + h_{\K_X}(x) \leq \epsilon' h_{-\K_X}(x) + \mathrm{O}(1) \leq \epsilon h_M(x) + \mathrm{O}(1),
\end{equation}
for all $x \in X(\KK)$ outside of
$$Z := W \bigcup \mathrm{Bs}(N). $$
The inequality \eqref{eqn8} is thus implied by \eqref{eqn12'}.

Now put $M = - \K_X$ and let $\epsilon > 0$.    We can rewrite the inequality
$$
\sum_{v \in S} \lambda_{Y,v}(\cdot) + h_{\K_X}(\cdot) \leq \epsilon h_{-\K_X}(\cdot) + \mathrm{O}(1)
$$
in terms of the blowing-up 
$$\pi\colon \widetilde{X} \rightarrow X_\FF$$ 
of $X$ along $Y$:  
\begin{equation}\label{eqn14}
m_S(\cdot,E) \leq (1 + \epsilon) h_{-\pi^* \K_X}(\cdot) + \mathrm{O}(1).
\end{equation}
By assumption, 
$$E = E_1 + \hdots + E_q,$$
the Cartier divisors $E_i$ intersect properly and
$$ 
\beta(-\pi^* \K_X,E_i) \geq 1,
$$
for all $i$.  Thus:
$$
\max_{1 \leq i \leq q} \beta(-\pi^* \K_X, E_i)^{-1} + \epsilon \leq 1 + \epsilon.
$$

We now apply Theorem \ref{general:Arithmetic:General:Theorem} and Proposition \ref{jet:prop:1}.  There exists constants $\tilde{a}_\epsilon, \tilde{b}_\epsilon > 0$ so that either:
\begin{equation}\label{eqn15}
h_{- \pi^* \K_X}(x) \leq \tilde{a}_\epsilon;
\end{equation}
or
\begin{equation}\label{eqn16}
m_{S}(x,E) \leq \left( \max_{1 \leq i \leq q } \beta(- \pi^* \K_X, E_i)^{-1} + \epsilon \right) h_{- \pi^* \K_X}(x) + \tilde{b}_\epsilon
\end{equation}
for all $\KK$-rational points $x \in \widetilde{X}(\KK)$ outside of some proper Zariski closed subset $\widetilde{Z} \subsetneq \widetilde{X}$.  

We can rewrite these inequalities \eqref{eqn15} and \eqref{eqn16} in terms of $X$.   As in \eqref{eqn12'} and \eqref{eqn14}, we obtain constants $a_\epsilon, b_\epsilon > 0$ so that either:
$$
h_{-\K_X}(x) \leq a_\epsilon
$$
or 
$$
\sum_{v \in S} \lambda_{Y,v}(x) \leq  ( 1 + \epsilon)h_{-  \K_X}(x) + b_\epsilon
$$
for all $\KK$-rational points $x \in X(\KK)$ outside of some proper subvariety $Z \subsetneq X$.
\end{proof}

\end{document}